\numberwithin{equation}{section}
\theoremstyle{plain}
\newtheorem{thm}{\protect\theoremname}[section]
\theoremstyle{definition}
\newtheorem{problem}[thm]{\protect\problemname}
\theoremstyle{plain}
\newtheorem{cor}[thm]{\protect\corollaryname}
\theoremstyle{plain}
\newtheorem{prop}[thm]{\protect\propositionname}
\theoremstyle{plain}
\newtheorem{lem}[thm]{\protect\lemmaname}
\providecommand{\corollaryname}{Corollary}
\providecommand{\lemmaname}{Lemma}
\providecommand{\problemname}{Problem}
\providecommand{\propositionname}{Proposition}
\providecommand{\theoremname}{Theorem}
\let\ps@plain\ps@fancy   
\begin{document}
\global\long\def\r{\mathbb{R}}%
\global\long\def\z{\mathbb{Z}}%
\global\long\def\q{\mathbb{Q}}%
\global\long\def\n{\mathbb{N}}%
\global\long\def\rmz{\mathbb{R}/\mathbb{Z}}%
\global\long\def\ev#1{\textbf{E}\left(#1\right)}%
\global\long\def\ee#1#2{\textbf{E}_{#1}\left(#2\right)}%
\global\long\def\rf#1{\left\lceil #1\right\rceil }%
\global\long\def\fr#1{\left[#1\right]}%
\global\long\def\sy{\mathcal{B}}%
\global\long\def\no#1{\left\Vert #1\right\Vert }%
\global\long\def\con{\mathcal{C}}%
\global\long\def\pr{\textbf{P}}%
\global\long\def\var#1{\textbf{Var}\left(#1\right)}%

\renewcommand*{\thefootnote}{\fnsymbol{footnote}}

\newsavebox{\overlongequation} \newenvironment{dontbotheriftheequationisoverlong}  {\begin{displaymath}\begin{lrbox}{\overlongequation}$\displaystyle}  {$\end{lrbox}\makebox[0pt]{\usebox{\overlongequation}}\end{displaymath}}

\newcommand{\diam}{\mathop{}\!\mathrm{diam}}

\newcommand{\essup}{\mathop{}\!\mathrm{ess\,sup}}

\newcommand{\mjo}{\mathop{}\!\mathrm{mjo}}
\title[]{Some measure rigidity and equidistribution results for $\beta$-maps}
\author{Nevo Fishbein \\ \baselineskip=1.5\baselineskip \MakeLowercase{(e-mail: {nevofi@gmail.com}) }}

\maketitle

\begin{abstract}
We prove $\times a$ $\times b$ measure rigidity for multiplicatively
independent pairs when $a\in\n$ and $b>1$ is a ``specified'' real
number (the $b$-expansion of $1$ has a tail or bounded runs of $0$'s)
under a positive entropy condition. This is done by proving a mean
decay of the Fourier series of the point masses average along $\times b$
orbits. We also prove a quantitative version of this decay under stronger
conditions on the $\times a$ invariant measure. The quantitative
version together with the $\times b$ invariance of the limit measure
is a step toward a general Host-type pointwise equidistribution theorem
in which the equidistribution is for Parry measure instead of Lebesgue.
We show that finite memory length measures on the $a$-shift meet
the mentioned conditions for mean convergence. Our main proof relies
on techniques of Hochman.
\end{abstract}

\section{Introduction}

\subsection{Definitions and notation}

First, we introduce a couple of frequently used definitions. Given
a positive real number $b$ we define the $b$-fold map $T_{b}:\left[0,1\right)\rightarrow\left[0,1\right)$
by $T_{b}\left(x\right)=b\cdot x\mod1$. We identify $\left[0,1\right)$
with $\rmz$ so that $T_{b}$ is referred interchangeably as a toral
map that has at most one discontinuity at $0$. For a real pair $\left(s,t\right)\in\left(1,\infty\right)\times\left(1,\infty\right)$
we say that they are \emph{multiplicatively independent} and write
$s\nsim t$ if $\frac{\log s}{\log t}\notin\q$. As is customary,
we write $\fr{\cdot},\rf{\cdot}$ for the floor and the\textbf{ }ceiling
functions respectively and $\left\{ x\right\} =x-\fr x$ for the fractional
part of a non-negative real number $x$.

For a Polish space $X$ denote by $\left(X,\Sigma\right)$ the Borel
space that is associated to it. Let $\mu$ be some probability measure
on $\left(X,\Sigma\right)$. A sequence of points $\left\{ x_{n}\right\} _{n=1}^{\infty}$
in $\left(X,E\right)$ is said to be \emph{equidistribute for $\mu$}
if the mean of their point masses weakly-{*} converges to $\mu$:
$\frac{1}{N}\sum_{n=1}^{N}\delta_{x_{n}}\xrightarrow{w-*}\mu$. Let
$T:X\rightarrow X$ be a function. If for some $x\in X$ we have $\frac{1}{N}\sum_{n=1}^{N}\delta_{T^{n}x}\xrightarrow{w-*}\mu$
then we say that $x$ \emph{equidistributes for $\mu$ under $T$}.

For every $m\in\z$ and $x\in\rmz$ we write $e_{m}\left(x\right)=e\left(mx\right)=\exp\left(2\pi imx\right)$.
Let $\mu$ be a finite Borel measure on $\rmz$ and $M$$\left(\rmz\right)$
be the set of all such measures. The \emph{$m$th Fourier coefficient
of $\mu$} is defined to be $\hat{\mu}\left(m\right)=\intop e_{m}\left(x\right)d\mu\left(x\right)$,
the \emph{Fourier transform of $\mu$} is the sequence $\hat{\mu}=\left(\hat{\mu}\left(n\right)\right)_{n\in\z}$
and the map $\mathcal{F}:M\left(\rmz\right)\rightarrow\ell^{\infty}\left(\z\right)$
defined by $\mathcal{F}\left(\mu\right)=\hat{\mu}$ is the \emph{Fourier
transform}.

We denote the Lebesgue measure on $\rmz$ by $m$ and the integration
with respect to Lebesgue measure by $dz$. In our context, all the
absolutely continuous measures are with respect to Lebesgue. 

\subsection{Background}

Furstenberg's Diophantine theorems \cite{key-2} formed the background
to Furstenberg's pioneering $\times2$ $\times3$ conjecture about
the measure rigidity of $T_{2},T_{3}$. This conjecture suggests that
the only non-atomic and ergodic Borel probability measure on the circle
which is also $T_{2}$ and $T_{3}$ invariant is Lebesgue measure. 

The best result so far on this problem was proved by Rudolph and later
strengthened further by Johnson, now known as Rudolph-Johnson theorem
\cite{key-4,key-5}. It establishes the conjecture for every multiplicatively
independent pair of integers $m,n\geq2$ under the additional assumption
of positive entropy. 

Later on the following pointwise 'equidistributional' version was
proved by Host \cite{key-1} when $\gcd\left(a,b\right)=1$ and improved
to the case $a\nsim b$ by Hochman and Shmerkin \cite{key-7}: Let
$\mu$ be a probability measure on $\rmz$ which is invariant, ergodic
and has positive entropy w.r.t. an endomorphism $T_{a}$. Then $\mu$-a.e.
$x$ equidistributes for Lebesgue measure under every endomorphism
$T_{b}$ with $a\nsim b$. We write HET (Host's equidistribution theorem)
for short when referring to this theorem. 

In another direction the next result is due to Parry \cite{key-6}:
Given a real $b>1$ there exists a unique $T_{b}$-invariant Borel
probability measure that is equivalent to Lebesgue measure. Its Radon-Nikodym
derivative can be written explicitly as,
\[
f\left(x\right)={\displaystyle \sum_{x<T_{b}^{n}\left(1\right)}}\frac{1}{b^{n}}\text{ and }1-\frac{1}{b}\leq f\leq\frac{1}{1-\frac{1}{b}}
\]

Therefore for an integer $a\geq2$ and a non-integer $b>1$ there
doesn't exist a joint $T_{a}$ and $T_{b}$ invariant and absolutely
continuous probability measure.

This suggests the following problem:
\begin{problem}
\label{prob:(measure_rigidity)} Let $a\geq2$ be an integer and let
$b>1$ be a non integer such that $a\nsim b$. Is it true that there
are no non-atomic and ergodic Borel probability measures on the circle
which are both $T_{a}$ and $T_{b}$ invariant?
\end{problem}

The HET and the Parry measure may also be related to each other via
the next possible generalization of HET:
\begin{problem}
\label{prob:(Generalized-HET)}(Generalized HET) Let $\mu$ be a probability
measure on $\rmz$ which is invariant, ergodic and has positive entropy
w.r.t. an endomorphism $T_{a}$. Let $1<b\in\r$ with $a\nsim b$.
Is it true that $\mu$-a.e. $x$ equidistributes for Parry measure
under $T_{b}$?
\end{problem}

These two problems are the main concern of this paper.

\subsection{\label{subsec:Results}Results}

We begin by outlining our strategy for tackling problems \ref{prob:(measure_rigidity)}
and \ref{prob:(Generalized-HET)} from a harmonic analysis perspective.
We denote the space of bi-infinite sequences whose both limits are
zero by $c_{0}\left(\z\right)$.

In the case of Problem \ref{prob:(measure_rigidity)} a possible strategy
is the following:
\begin{enumerate}
\item To show that for every probability measure $\mu$ on $\rmz$ which
is invariant, ergodic and has positive entropy w.r.t. an endomorphism
$T_{a}$, it holds that $\mu$-a.e. $x$,
\[
\left\{ \intop{\displaystyle \limsup_{N\rightarrow\infty}}\left|\frac{1}{N}\sum_{i=0}^{N-1}e_{m}\left(T_{b}^{i}\left(x\right)\right)\right|d\mu\right\} _{m=-\infty}^{\infty}\in c_{0}\left(\z\right)
\]
\item Assume by contradiction that $\mu$ is a non-atomic and ergodic Borel
probability measure on the circle which is $T_{a}$ and $T_{b}$-invariant
as in Problem \ref{prob:(measure_rigidity)}. Notice that,
\begin{align*}
\left|\hat{\mu}\left(m\right)\right| & \leq\intop{\displaystyle \limsup_{N\rightarrow\infty}}\left|\frac{1}{N}\sum_{i=0}^{N-1}e_{m}\left(T_{b}^{i}\left(x\right)\right)\right|d\mu
\end{align*}
and therefore by the first step,
\[
\left\{ \left|\hat{\mu}\left(m\right)\right|\right\} _{m=-\infty}^{\infty}\in c_{0}\left(\z\right)
\]
But the $T_{a}$ invariance implies that for every $m\in\z$ we have
$\hat{\mu}\left(m\right)=\hat{\mu}\left(a\cdot m\right)$ and the
limit above possible only when, 
\[
\hat{\mu}\left(m\right)=\begin{cases}
1 & \text{for }m=0\\
0 & \text{for }m\neq0
\end{cases}
\]
i.e. when $\mu$ is Lebesgue measure. This contradicts the assumption
that $\mu$ is $T_{b}$-invariant because Lebesgue measure is not
$T_{b}$-invariant.
\end{enumerate}
For the case of Problem \ref{prob:(Generalized-HET)} we will need
another definition. Let $a,b$ and $\mu$ as in Problem \ref{prob:(Generalized-HET)}.
Given some $x\in\left[0,1\right)$ and some subsequence $\left\{ N_{k}\right\} $
of $\n$ we denote $\lambda_{x,\left\{ N_{k}\right\} }=\lim_{k\rightarrow\infty}\frac{1}{N_{k}}\sum_{i=0}^{N_{k}-1}\delta_{T_{b}^{i}\left(x\right)}$,
if it exists. It is not hard to show that the only $T_{b}$-invariant
measure with $\ell^{2}\left(\z\right)$ Fourier transform is the Parry
measure. Therefore, a possible strategy to solve Problem \ref{prob:(Generalized-HET)}
is the following:
\begin{enumerate}
\item To show that $\mu$-a.e. $x$,
\[
\left\{ {\displaystyle \limsup_{N\rightarrow\infty}}\left|\frac{1}{N}\sum_{i=0}^{N-1}e_{m}\left(T_{b}^{i}\left(x\right)\right)\right|\right\} _{m=-\infty}^{\infty}\in\ell^{2}\left(\z\right)
\]
\item To show that for every $\mu$-typical $x$, $\lambda_{x,\left\{ N_{k}\right\} }$
is necessarily $T_{b}$-invariant (for every $\left\{ N_{k}\right\} $
such that the limit exists).
\item By the uniqueness of Parry measure we conclude that it is the limit
of every proper convergent subsequence of the sequence 
\[
\left\{ \frac{1}{N}\sum_{i=0}^{N-1}\delta_{T_{b}^{i}\left(x\right)}\right\} _{N=1}^{\infty}
\]
and we know that such a convergent subsequence does exist. Thus, the
whole sequence also converges to Parry measure.
\end{enumerate}
Before stating our first result we give another definition. Given
$1<b\in\r$ there is a non-surjective measurable embedding of $\left[0,1\right)$
in $\Lambda_{b}^{\n}=\left\{ 0,1,\dots,\fr b-1\right\} ^{\n}$ which
is given by the \emph{$b$-expansion}, 
\[
r\mapsto\left(\left[b\cdot r\right],\left[b\cdot T_{b}r\right],\left[b\cdot T_{b}^{2}r\right]\dots\right)
\]
That is, a subset of $\Lambda_{b}^{\n}$ can represent $\left[0,1\right)$.
We say that a real positive $b$ is a \emph{specified number} or with
the \emph{specification property} if the $b$-expansion of $1$ has
bounded runs of $0$'s or has a tail of $0$'s. The set of specified
numbers is uncountable and dense in $\left(1,\infty\right)$ (see
Section \ref{subsec:beta-shifts}). In Section \ref{sec:pf_of_Rigidity}
we prove the following:
\begin{thm}
\label{thm:Raj} Let $\mu$ be a probability measure on $\rmz$ which
is invariant, ergodic and has positive entropy w.r.t. an endomorphism
$T_{a}$ for some positive integer $a\geq2$ and let $b>1$ be a real
specified number such that $a\nsim b$. Then,
\[
\left\{ \intop{\displaystyle \limsup_{N\rightarrow\infty}}\left|\frac{1}{N}\sum_{i=0}^{N-1}e_{m}\left(T_{b}^{i}\left(x\right)\right)\right|d\mu\right\} _{m=-\infty}^{\infty}\in c_{0}\left(\z\right)
\]
\end{thm}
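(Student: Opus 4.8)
The target is a decay statement for the quantity $g(m):=\intop \limsup_N |\frac1N\sum_{i=0}^{N-1}e_m(T_b^i x)|\,d\mu$, namely that $g(m)\to 0$ as $|m|\to\infty$. My plan is to reduce this to an equidistribution-type statement obtained from a Hochman-style analysis of the geometry of $\times b$ orbits along dyadic (or $b$-adic) scales, as in the Host/Hochman–Shmerkin approach to HET, but now carefully tracking the $m$-dependence so that the resulting bound is uniform in the relevant parameters and tends to $0$ as $m$ grows.

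So let me think about the structure. First I would like to set up the symbolic model: via the $b$-expansion, $[0,1)$ embeds into the $\beta$-shift $\Sigma_b\subset\Lambda_b^{\mathbb N}$, and the specification property for $b$ is exactly what makes $\Sigma_b$ a nice (coded, or sofic-like) subshift with good combinatorial/measure-theoretic properties — in particular it should give me a usable bound on the number of $b$-adic intervals and control of how $T_b$ refines partitions. Then, fixing $m\ne 0$, write $S_N(x)=\frac1N\sum_{i=0}^{N-1}e_m(T_b^i x)$ and the goal is $\intop\limsup_N|S_N|\,d\mu$ small once $|m|$ is large. The key reduction I would aim for: by Fubini/Fatou and a maximal-inequality or subsequence argument, it suffices to bound $\limsup_N\frac1N\sum_{i=0}^{N-1}\mu\text{-averaged}$ correlations, and then to exploit the positive-entropy $T_a$-invariance of $\mu$ together with $a\nsim b$. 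The heart of the matter is a local-entropy/ergodic-theoretic dichotomy in the spirit of Hochman: because $\mu$ has positive entropy under $T_a$, its scenery along $b$-adic scales cannot be concentrated on a single frequency; multiplicative independence $a\nsim b$ prevents the $\times a$ and $\times b$ dynamics from aligning, so the Fourier mass at frequency $m$ gets smeared out as we push the orbit forward. Quantitatively, one expects a bound of the shape $g(m)\le C\cdot (\text{something like } (\log|m|)^{-\kappa}$ or a bound through the entropy deficiency at scale $\log|m|)$, but for this theorem I only need $g(m)\to0$, so a soft compactness argument upgrading a "no nonzero limit frequency" statement to uniform decay should be enough.

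Concretely the steps, in order: (1) Pass to the symbolic $\beta$-shift and record the consequences of specification — bounded distortion between $b$-adic and dynamical partitions, and an almost-everywhere generic-point statement for $\mu$ under $T_a$ via the ergodic theorem and Shannon–McMillan–Breiman. (2) For fixed $m$, reinterpret $\limsup_N|S_N(x)|$ as a statement about the asymptotic Fourier mass of the empirical measures $\frac1N\sum\delta_{T_b^i x}$ at frequency $m$; integrate against $\mu$ and use invariance to move the average inside. (3) Run the Hochman-type argument: decompose the orbit into blocks, use the $T_a$-invariance and positive entropy to get, at $\mu$-typical $x$, a lower bound on the "spread" of the conditional measures at small $b$-adic scales, and use $a\nsim b$ (a CAT/Diophantine input, cf. Furstenberg's Diophantine theorems cited in the paper) to conclude that averaging $e_m\circ T_b^i$ over a block produces cancellation. (4) Finally, argue for uniformity in $m$: suppose $g(m_k)\ge\varepsilon>0$ along some $m_k\to\infty$; extract a limiting object (a $T_a$-invariant, positive-entropy measure together with a "surviving frequency at infinity"), and derive a contradiction with the $a\nsim b$ rigidity — i.e. the only way to have a non-vanishing high-frequency contribution would force an $\times a$ and $\times b$ compatible structure.

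The step I expect to be the main obstacle is step (3)–(4): making the Hochman machinery yield a bound that is genuinely uniform in $m$ (so that one can pass to the limit $|m|\to\infty$) rather than a bound that degrades with $m$. The subtlety is that the "scale" at which the $\times b$ dynamics decorrelates frequency $m$ is roughly $\log_b|m|$, and one must ensure that the positive-entropy input for $T_a$ persists down to that scale with constants not depending on $m$ — this is exactly where the specification property on $b$ is used, to keep the combinatorics of the $\beta$-shift (number of cylinders, bounded runs of zeros) under control independently of $m$. A secondary technical nuisance is the single discontinuity of $T_b$ at $0$ and the non-surjectivity of the $b$-expansion embedding; these are handled by a standard cutting argument, approximating $e_m$ by functions supported away from the orbit of the discontinuity and absorbing the error into an $o(1)$ term, but it has to be done while preserving the $m$-uniformity.
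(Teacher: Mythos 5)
Your overall framework (conditional measures of $\mu$ along $a$-adic scales, Hochman-style scenery analysis, specification to control the $\beta$-shift geometry) is the right one and matches the paper's setting, but there is a genuine gap at exactly the place you flag as the main obstacle: steps (3)--(4). The decay in $m$ cannot be obtained by the ``soft compactness argument'' you propose. The quantity $g(m)=\int\limsup_N\bigl|\frac1N\sum_{i<N}e_m(T_b^ix)\bigr|d\mu$ is not a Fourier coefficient of any fixed measure, and the characters $e_{m_k}$ with $m_k\to\infty$ have no convergent subsequence in any topology that would let you extract a ``limiting object'' (a $T_a$-invariant measure with a ``surviving frequency at infinity'') from the assumption $g(m_k)\ge\varepsilon$; there is simply no compactness in the frequency variable to exploit, and no rigidity statement available at this stage of the argument to contradict (the rigidity corollary is deduced \emph{from} the theorem, not the other way around). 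So the reduction from ``no nonzero limit frequency'' to uniform decay is not a standard upgrade; it is the whole content of the theorem.

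In the paper the decay in $m$ comes from a concrete quantitative mechanism that your sketch never supplies: (i) Hochman's scenery-replacement theorem replaces $\frac1N\sum\delta_{T_b^nx}$ by pushforwards of the conditional measures $(\mu_\omega)_{\mathcal{A}_{n'}(x)}$, which by the equivariance of the natural extension are affine rescalings $R_\theta S_{a^{-n'}}\mu_{\widetilde T_a^{n'}(\omega,x)}$; (ii) multiplicative independence $a\nsim b$ enters not as a Diophantine/cancellation input on blocks but through Hochman's equidistribution of $(\{n\alpha\},\widetilde T_a^{\lfloor n\alpha\rfloor}(\omega,x))$, $\alpha=\log b/\log a$, which converts the time average over $n$ into an integral over the scale parameter $z\in[0,1]$ against Lebesgue; (iii) the oscillatory-integral lemma then bounds $\int_0^1|\int_{I_j}e(ma^zy)d\mu_\eta|^2dz$ by $\frac{2\mu_\eta(I_j)^2}{r|m|}$ plus the $r$-energy $\iint\chi_{B_r(y')}(y)d\mu_\eta d\mu_\eta$, and positive entropy (hence non-atomic conditional measures) plus dominated convergence lets one choose $r=r(m)\to0$ and the partition size $k=k(m)\to\infty$ so the whole bound tends to $0$ with $|m|$. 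Specification is used at a specific point you only gesture at: it bounds the minimal jump of $T_b^n$ from below uniformly in $n$, hence bounds by $\lceil a/m_b\rceil$ the number of discontinuities of $T_b^n\circ R_\theta\circ S_{a^{-n'}}$ on a rescaled cell, so the ``bad'' partition intervals contribute only $\lceil a/m_b\rceil\sup_j\mu_\omega(I_j^k)\to0$. Without the $L^2$ average over scales in (ii)--(iii), or some substitute producing explicit $m$-dependence, your plan does not close.
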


According to our strategy, Theorem \ref{thm:Raj} yield a Furstenberg-type
measure rigidity for the class of specified $b$'s,
\begin{cor}
\label{co:rigidity} Let $a\geq2$ be an integer and $b>1$ be a specified
non integer such that $a\nsim b$. Then there are no jointly $T_{a}$
and $T_{b}$ invariant non-atomic and ergodic Borel probability measures
with positive entropy under $T_{a}$.
\end{cor}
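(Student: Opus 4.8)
The plan is to obtain Corollary \ref{co:rigidity} as a formal consequence of Theorem \ref{thm:Raj}, running the contradiction argument already sketched for Problem \ref{prob:(measure_rigidity)} in Section \ref{subsec:Results}; no new idea beyond that theorem is required. Assume, toward a contradiction, that $\mu$ is a non-atomic ergodic Borel probability measure on $\rmz$ which is invariant under both $T_{a}$ and $T_{b}$ and has positive entropy with respect to $T_{a}$. Since $\mu$ is $T_{a}$-invariant, ergodic and of positive entropy, and since $b$ is a specified real number with $a\nsim b$, Theorem \ref{thm:Raj} applies to $\mu$ and gives $\bigl(\intop\limsup_{N\to\infty}\left|\frac{1}{N}\sum_{i=0}^{N-1}e_{m}(T_{b}^{i}x)\right|\,d\mu\bigr)_{m\in\z}\in c_{0}(\z)$.

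First I would transfer this decay to the Fourier coefficients of $\mu$ itself. Fix $m\in\z$. For each $N$ the $T_{b}$-invariance of $\mu$ gives $\intop e_{m}(T_{b}^{i}x)\,d\mu=\hat{\mu}(m)$ for every $i$, hence $\hat{\mu}(m)=\intop\frac{1}{N}\sum_{i=0}^{N-1}e_{m}(T_{b}^{i}x)\,d\mu$. Taking moduli, moving the absolute value inside the integral, and then passing to $\limsup_{N\to\infty}$ while interchanging it with the integral---which is permitted by the reverse Fatou lemma, all integrands being bounded by $1$ on the probability space $(\rmz,\mu)$---I obtain $\left|\hat{\mu}(m)\right|\le\intop\limsup_{N\to\infty}\left|\frac{1}{N}\sum_{i=0}^{N-1}e_{m}(T_{b}^{i}x)\right|\,d\mu$. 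By Theorem \ref{thm:Raj} the right-hand side, regarded as a sequence in $m$, lies in $c_{0}(\z)$, so $\{\left|\hat{\mu}(m)\right|\}_{m\in\z}\in c_{0}(\z)$.

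Next I would use the $T_{a}$-invariance. Since $a$ is an integer, $e_{m}\circ T_{a}=e_{am}$ on $\rmz$, so invariance gives $\hat{\mu}(m)=\hat{\mu}(am)$ and, iterating, $\hat{\mu}(m)=\hat{\mu}(a^{k}m)$ for every $k\ge0$. For $m\ne0$ we have $\left|a^{k}m\right|\to\infty$, so the $c_{0}$ property forces $\hat{\mu}(m)=0$; together with $\hat{\mu}(0)=1$ and the uniqueness of Fourier coefficients this identifies $\mu$ with Lebesgue measure. But Lebesgue measure is visibly $T_{a}$-invariant and absolutely continuous, so by the remark following Parry's theorem in the introduction---no jointly $T_{a}$- and $T_{b}$-invariant absolutely continuous probability measure exists when $a\in\n$ and $b\notin\n$---it cannot be $T_{b}$-invariant (equivalently, one checks directly that $(T_{b})_{*}m\ne m$ for non-integer $b$). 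This contradicts the $T_{b}$-invariance of $\mu$ and completes the proof.

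I do not foresee a genuine obstacle in Corollary \ref{co:rigidity} itself: all the analytic work sits in Theorem \ref{thm:Raj}, and the deduction above is soft. The only two points warranting a line of justification are the interchange of $\limsup_{N}$ with $\intop\,d\mu$ (reverse Fatou on a probability space) and the elementary fact that Lebesgue measure is not $T_{b}$-invariant when $b$ is not an integer; neither is delicate.
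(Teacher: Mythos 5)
Your proposal is correct and follows essentially the same route as the paper: Corollary \ref{co:rigidity} is deduced from Theorem \ref{thm:Raj} exactly by the contradiction strategy sketched in Section \ref{subsec:Results} ($T_{b}$-invariance gives $\left|\hat{\mu}(m)\right|\le\intop\limsup_{N}\left|\frac{1}{N}\sum_{i=0}^{N-1}e_{m}(T_{b}^{i}x)\right|d\mu$, hence $\hat{\mu}\in c_{0}(\z)$, and $T_{a}$-invariance then forces $\mu$ to be Lebesgue, which is not $T_{b}$-invariant for non-integer $b$). Your reverse-Fatou justification is just a careful spelling-out of the inequality the paper states directly, so no genuinely new ingredient is involved.
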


This is an answer to Problem \ref{prob:(measure_rigidity)} for the
class of specified $b$'s under an entropy assumption (the same assumption
currently needed when $b\in\n$).

To explain our next result we need a few more definitions. For convenience
we denote $\Lambda_{a}^{-\n}=\Omega^{-}$. For an integer $a\geq2$,
let $\mathcal{A}$ denote the $a$-adic partition of $\left[0,1\right)$:
$\left\{ \left[\frac{k}{a},\frac{k+1}{a}\right)\right\} _{k=0}^{a-1}$
and $\mathcal{\mathcal{A}}\left(x\right)\in\mathcal{\mathcal{A}}$
denote the element which contains $x\in X$. Let $\widetilde{\Omega}=\Omega^{-}\times\left[0,1\right)$
be the natural extension of $\left(\rmz,\mu,T_{a}\right)$ together
with the map $\widetilde{T}_{a}\left(\omega,x\right)=\left(\omega\mathcal{A}\left(x\right),T_{a}x\right)$
and write $\widetilde{\mu}$ for the unique extension of $\mu$ to
a $\widetilde{T}_{a}$-invariant measure on $\widetilde{\Omega}$.
Let $\mathcal{C}=\bigvee_{i=-\infty}^{0}{\displaystyle \widetilde{T}_{a}^{i}\mathcal{A}}$
denote the $\sigma$-algebra in $\widetilde{\Omega}$ generated by
projection to the past: $\widetilde{\omega}=\left(\omega,x\right)\mapsto\omega$
and let $\left\{ \widetilde{\mu}_{\omega}^{\mathcal{C}}\right\} _{\omega}$
be the corresponding disintegration. Notice that the members of $\left\{ \widetilde{\mu}_{\omega}^{\mathcal{C}}\right\} _{\omega}$
depend only on the $\Omega^{-}$-component and that we can identified
them as measures on $\left[0,1\right)$ such that $\mu=\intop\mu_{\omega}d\widetilde{\mu}\left(\omega\right)$.

In Section \ref{part:Pf_quant_Raj} under stronger assumptions we
prove a quantitative mean version of Theorem \ref{thm:Raj}:
\begin{thm}
\label{thm:quant_Rajchman} Let $a,b,\mu$ be as in Theorem \ref{thm:Raj}
and let $\left\{ I_{j}^{n}\right\} _{j=1}^{n}$ be the uniform partition
of the unit interval into $n$ sub-intervals. Assume that there exists
some positive $\alpha$, such that for every $n$ the following holds,
\[
\underset{\eta,j}{\essup}\mu_{\eta}\left(I_{j}\right)\leq O\left(n^{-\alpha}\right)
\]
and such that for some positive $\beta$, for every positive integer
$n$ we have, 
\[
\underset{\eta}{\essup}\iint\chi_{\left\{ x:\left|x-y\right|<n^{-1}\right\} }d\mu_{\eta}\left(x\right)d\mu_{\eta}\left(y\right)\leq O\left(n^{-\beta}\right)
\]

Then for every integer $m$,
\[
\ee{\mu}{\limsup_{N}\left|\frac{1}{N}\sum_{i=0}^{N-1}e_{m}\left(T_{b}^{i}x\right)\right|}<O\left(\left|m\right|^{\min_{\gamma,\delta>0}\max\left\{ -\delta\alpha,\frac{-1-\delta\left(\alpha-1\right)+\gamma\delta}{2},\frac{\delta\left(1-\gamma\beta\right)}{2}\right\} }\right)
\]
\end{thm}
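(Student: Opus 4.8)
The plan is to follow the same scheme used to prove Theorem \ref{thm:Raj}, but to track the quantitative dependence on the frequency $m$ through each step. The heart of the argument, following Hochman's techniques as adapted in Section \ref{sec:pf_of_Rigidity}, is to bound the ergodic average $\frac{1}{N}\sum_{i=0}^{N-1}e_m(T_b^i x)$ by relating it (via the natural extension $\widetilde{\Omega}$ and the disintegration $\{\mu_\omega\}$) to how well the conditional measures $\mu_\eta$ can be resolved at scales adapted to the frequency $m$ and to the time-scaling induced by $T_b$. The two hypotheses are precisely the two quantitative inputs this requires: the first, $\essup_{\eta,j}\mu_\eta(I_j)\le O(n^{-\alpha})$, controls how much mass any single conditional measure can place in a dyadic-type interval of length $n^{-1}$ (a non-atomicity rate, upgrading the qualitative non-atomicity used in Theorem \ref{thm:Raj}); the second, $\essup_\eta\iint\chi_{\{|x-y|<n^{-1}\}}\,d\mu_\eta(x)\,d\mu_\eta(y)\le O(n^{-\beta})$, is an energy/correlation bound saying that a pair of independent $\mu_\eta$-points is unlikely to fall within $n^{-1}$ of each other. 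This second quantity is exactly what appears when one expands $|\sum e_m(T_b^i x)|^2$ and integrates, so it is the natural object to feed into an $L^2$ estimate.

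Concretely, I would first fix a frequency $m$ and a small resolution parameter, to be optimized, say of the form $|m|^{-\delta}$ for a parameter $\delta>0$; partition according to a generating partition for $T_a$ at a depth chosen so that the atoms have size comparable to this resolution. Using $T_a$-invariance and the Rudolph--Johnson/Hochman machinery, one transfers the problem of bounding the $T_b$-ergodic average on $\mu$ to a bound on $\mu_\eta$-a.e. fiber, where now one may apply both hypotheses with $n\approx |m|^{\delta}$ (or a related power). The term $-\delta\alpha$ in the exponent comes directly from the first hypothesis controlling the mass of a single small interval; the term $\frac{\delta(1-\gamma\beta)}{2}$ comes from the $L^2$/energy estimate, where $\gamma>0$ is an auxiliary exponent governing how fine a scale one pushes the pair-correlation bound to relative to the frequency; and the middle term $\frac{-1-\delta(\alpha-1)+\gamma\delta}{2}$ is the cross/error term that arises from balancing the contribution of off-diagonal pairs against the $\frac{1}{N}$ normalization and the diameter distortion caused by iterating $T_b$. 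The inner $\max$ is what a single choice of $(\gamma,\delta)$ yields (the worst of the three competing error terms), and the outer $\min_{\gamma,\delta>0}$ reflects that we are free to optimize over these parameters.

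The step I expect to be the main obstacle is making the transfer from $\mu$ to the fiber measures $\mu_\eta$ \emph{quantitative} in $m$: in the proof of Theorem \ref{thm:Raj} one only needs qualitative non-atomicity and can afford to take limits and pass to subsequences, whereas here one must commit, for each $m$, to an explicit depth in the natural extension and an explicit scale, and carefully estimate the contribution of the ``bad'' fibers (where the hypotheses' $O(\cdot)$ bounds are not yet effective, or where the $T_b$-orbit has not yet equidistributed on the relevant scale) against the ``good'' fibers. Controlling the interaction between the multiplicative structure of $T_b$ (the Rauzy/$\beta$-shift combinatorics that make $b$ specified matter) and the $T_a$-adic scale decomposition, uniformly enough to get a clean power of $|m|$, is the delicate part; in particular one must ensure the implied constants in the two $O(\cdot)$ hypotheses do not secretly depend on the depth, so that they can legitimately be taken with $n$ a power of $|m|$.

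Finally, I would assemble the three contributions, observe that each is a negative power of $|m|$ for suitable $(\gamma,\delta)$ (note the first hypothesis forces $\alpha$ small enough that $\alpha-1$ may be negative, which is why the middle term is written with $-\delta(\alpha-1)$ and can still be made negative by taking $\delta$ large and $\gamma$ small, while keeping $\gamma\beta<1$ so the last term stays negative), take the worst of the three for the chosen parameters, and then optimize to obtain the stated exponent. The passage from the $N$-averages to the $\limsup_N$ and then to $\ee{\mu}{\cdot}$ is by the dominated/bounded convergence argument already used for Theorem \ref{thm:Raj}, since all the ergodic averages are bounded by $1$ in modulus.
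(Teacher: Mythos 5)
You follow essentially the same route as the paper: repeat the proof of Theorem \ref{thm:Raj} up to step (\ref{eq:last-1-1}), replace the qualitative error terms by the assumed rates, apply Cauchy--Schwartz and Lemma \ref{lem:3.2}, then set $r=k^{-\gamma}$ and $k=\bigl[\left|m\right|^{\delta}\bigr]$ and take the worst of the three resulting powers before optimizing over $\left(\gamma,\delta\right)$; your reading of the roles of the two hypotheses and of the parameters $\gamma,\delta$ matches the paper's proof. Two points in your narrative are off and worth fixing. First, the middle exponent $\frac{-1-\delta\left(\alpha-1\right)+\gamma\delta}{2}$ has nothing to do with the $\frac{1}{N}$ normalization: the time average is removed entirely by Theorem \ref{thm:Corollary-2.7}, which converts it into an integral over the scaling parameter $z$, and the factor $\left|m\right|^{-1}$ then comes from the term $2\mu\left(\left[c,d\right]\right)^{2}/\left(r\left|m\right|\right)$ in Lemma \ref{lem:3.2} (integration by parts in the $z$-average), combined with $k\sum_{j}\mu_{\eta}\left(I_{j}^{k}\right)^{2}\leq O\left(k^{1-\alpha}\right)$, which is a second use of hypothesis (\ref{eq:uni_con}); likewise the $-\delta\alpha$ term is not a generic single-interval bound but the contribution of the at most $\left\lceil a/m_{b}\right\rceil$ partition intervals containing a discontinuity of $T_{b}^{n}R_{\theta}S_{a^{-n'}}$, which is exactly where the specification of $b$ enters. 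Second, your closing sign analysis is reversed: the last term $\frac{\delta\left(1-\gamma\beta\right)}{2}$ is negative precisely when $\gamma\beta>1$ (not $<1$), and since necessarily $\alpha\leq1$ the middle term forces $\delta$ to be small enough that $\delta\left(1-\alpha+\gamma\right)<1$, rather than large; the paper's optimal choice is $\gamma=\frac{2\alpha+1}{\beta}$ and $\delta=\frac{\beta}{\beta\left(1+\alpha\right)+2\alpha+1}$, giving the exponent $-\frac{\alpha\beta}{\beta\left(1+\alpha\right)+2\alpha+1}$, strictly between $-\frac{1}{2}$ and $0$. These slips do not affect the stated min--max bound itself. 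Finally, the ``bad fiber'' issue you anticipate does not arise: both hypotheses are essential-supremum bounds, uniform in $\eta$ and with constants uniform in $n$, so they plug directly into the fiberwise estimate with $n$ a power of $\left|m\right|$.
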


We remark that one can always assume $\alpha\le\beta$ (see Section
\ref{part:Pf_quant_Raj}).

In Section \ref{subsec:Finite-memory-length} we show that the conditions
on $\mu$ in Theorem \ref{thm:quant_Rajchman} are flexible enough
to work for every process with memory of finite length such as variable
length mixing Markov chains. Nonetheless, it certainly does not always
work as we show in Section \ref{subsec:A-counterexample}.

Our secondary result from Section \ref{part:The-invariance-of} says
that,
\begin{thm}
\label{thm:xb-inv.} For every real $b>1$ and a $\mu$-typical $x$,
if $\left\{ 0\right\} $ is not an atom of a partial limit $\lambda_{x,\left\{ N_{k}\right\} }=\lim_{k\rightarrow\infty}\frac{1}{N_{k}}\sum_{i=0}^{N_{k}-1}\delta_{T_{b}^{i}}\left(x\right)$,
then $\lambda_{x,\left\{ N_{k}\right\} }$ must be $T_{b}$-invariant.
\end{thm}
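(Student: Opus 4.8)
The plan is to adapt the classical Krylov--Bogolyubov argument — that along the orbit of a \emph{continuous} map the Cesàro averages of the point masses have $T_b$-invariant weak-$*$ limits — to the map $T_b$, whose only failure of continuity on $\rmz$ is a single jump at $0$; the hypothesis that $\{0\}$ is not an atom of $\lambda_{x,\{N_k\}}$ is exactly what neutralizes that jump. Fix $b>1$ and a point $x$ for which $\lambda:=\lambda_{x,\{N_k\}}$ exists and $\lambda(\{0\})=0$, and set $\mu_k=\frac1{N_k}\sum_{i=0}^{N_k-1}\delta_{T_b^i x}$, so that $\mu_k\xrightarrow{w-*}\lambda$. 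Since $T_b$-invariance of $\lambda$ is equivalent to $\int f\circ T_b\,d\lambda=\int f\,d\lambda$ for every $f\in C(\rmz)$, this is what I would prove.

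The first step is to pin down the discontinuity set of $T_b$ viewed on $\rmz$. Representing circle points in $[0,1)$ and using that $T_b$ is linear with slope $b$ on each interval $[j/b,(j+1)/b)\cap[0,1)$, one checks at every breakpoint $j/b\in(0,1)$ that the left-hand limit of $T_b$ equals $1\equiv 0$, which agrees with both the value $T_b(j/b)=0$ and the right-hand limit; so all breakpoints are removable on $\rmz$. The only point at which continuity can fail is $0\equiv 1$, where the left-hand limit equals $b-\lfloor b\rfloor$ while $T_b(0)=0$ — a genuine jump precisely when $b\notin\n$. Hence, for any $f\in C(\rmz)$, the function $h:=f\circ T_b$ is bounded Borel with $\mathrm{Disc}(h)\subseteq\{0\}$.

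The second step combines two limits. By the mapping theorem for weak convergence (the generalized portmanteau lemma: if $\mu_k\xrightarrow{w-*}\lambda$ and $h$ is bounded Borel with $\lambda(\mathrm{Disc}(h))=0$, then $\int h\,d\mu_k\to\int h\,d\lambda$), applied with $\lambda(\mathrm{Disc}(h))\le\lambda(\{0\})=0$, we get
\[
\frac1{N_k}\sum_{i=0}^{N_k-1}f\bigl(T_b^{i+1}x\bigr)=\int f\circ T_b\,d\mu_k\;\longrightarrow\;\int f\circ T_b\,d\lambda .
\]
On the other hand, telescoping the left-hand side,
\[
\frac1{N_k}\sum_{i=0}^{N_k-1}f\bigl(T_b^{i+1}x\bigr)=\frac1{N_k}\sum_{i=0}^{N_k-1}f\bigl(T_b^{i}x\bigr)+\frac{f\bigl(T_b^{N_k}x\bigr)-f(x)}{N_k},
\]
where the first term tends to $\int f\,d\lambda$ and the second to $0$ because $f$ is bounded. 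Equating the two limits gives $\int f\circ T_b\,d\lambda=\int f\,d\lambda$, i.e.\ $\lambda$ is $T_b$-invariant.

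I expect the only subtle point to be the interplay between the inclusion $\mathrm{Disc}(f\circ T_b)\subseteq\{0\}$ and the generalized portmanteau lemma: the hypothesis $\lambda(\{0\})=0$ is used at exactly one place and is genuinely necessary, since if the $T_b$-orbit of $x$ accumulated a positive proportion of its time arbitrarily close to the jump of $T_b$ at $0$ the conclusion could fail. The branch-point computation and the telescoping estimate are routine. (Note that the argument above does not actually use $\mu$-typicality of $x$; it appears in the statement only because that is the regime relevant to the strategy outlined for Problem~\ref{prob:(Generalized-HET)}.)
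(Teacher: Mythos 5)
Your proposal is correct and follows essentially the same route as the paper's own proof: both invoke the continuous-mapping (generalized portmanteau) theorem with the observation that $f\circ T_{b}$ is discontinuous only at $0$, use the hypothesis $\lambda\left(\left\{ 0\right\} \right)=0$ at exactly that point, and finish with the telescoping identity showing $\lambda_{N_{k}}-T_{b*}\lambda_{N_{k}}\rightarrow0$. Your explicit verification that the interior breakpoints $j/b$ are removable on $\rmz$, and the remark that $\mu$-typicality is not actually needed, are accurate refinements of what the paper states more briefly.
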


In particular, Theorem \ref{thm:Raj} together with Wiener's lemma
\cite{key-23} imply that $\left\{ 0\right\} $ is not an atom of
$\lambda_{x,\left\{ N_{k}\right\} }$, so Theorem \ref{thm:xb-inv.}
proves step 2 in the second strategy under the assumptions of Theorem
\ref{thm:Raj}.

Notice that since 
\[
-0.5<\min_{\gamma,\delta>0}\max\left\{ -\delta\alpha,\frac{-1-\delta\left(\alpha-1\right)+\gamma\delta}{2},\frac{\delta\left(1-\gamma\beta\right)}{2}\right\} <0
\]
 Our strategy fails to solve Problem \ref{prob:(Generalized-HET)}.
Let us briefly examine some other alternatives. We focused on trying
to relax the $\ell^{2}\left(\z\right)$ condition as follows.

Let $L^{1}\left(m\right)$ be the set of absolutely continuous measures
and let $M_{c}\left(\rmz\right)$ be the set of continuous measures.
We say that a measure $\mu$ on $\rmz$ is \emph{Rajchman} if $\lim_{\left|n\right|\rightarrow\infty}{\displaystyle \hat{\mu}\left(n\right)=0}$
and denote the set of Rajchman measures by $\mathcal{R}$. The Riemman-Lebesgue
lemma implies that every absolutely continuous measure is Rajchman.
Works by several mathematicians at the beginning of the 20th century
(see \cite{key-17}) established the more extensive result that $L^{1}\left(m\right)\subsetneq\mathcal{R}\subsetneq M_{c}\left(\rmz\right)$.
We can naturally ask: Is it true that Rajchman $T_{b}$-invariant
measure must be absolutely continuous? Equivalently, is it true that
$L^{1}\bigl(m\bigr)\cap\bigl\{ T_{b}\text{-invariant}\bigr\}=\mathcal{R}\cap\bigl\{ T_{b}\text{-invariant}\bigr\}$?
Some evidence for a positive answer comes from the special case of
an integer $b$: In this case the Parry measure is just the Lebesgue
measure and if $\mu$ is $T_{b}$-invariant and Rajchman then as we
already observed for every $n\in\z$, $\hat{\mu}\left(T_{b}n\right)=\hat{\mu}\left(b\cdot n\right)$.
Thus, besides $\hat{\mu}\left(0\right)$ all the other Fourier coefficients
must vanish and $\mu$ is indeed Lebesgue. However, in Section \ref{part:Rajchman+invariant is insufficient}
we show that,
\begin{prop}
There exists a $T_{b}$-invariant Rajchman measure that is not absolutely
continuous.
\end{prop}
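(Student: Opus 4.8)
The plan is to exhibit the measure as the push-forward to $\rmz$ of an equilibrium state on the $\beta$-shift, and then to read the Rajchman property off the Fourier decay of a homogeneous self-similar measure. The computation recalled just before the statement shows that an \emph{integer} base cannot serve --- there $\hat{\mu}\left(b^{k}n\right)=\hat{\mu}\left(n\right)$ pins $\mu$ to Lebesgue --- so the example must live at a non-integer base, and I will take $b>1$ to be a specified number that is \emph{not} Pisot. Such $b$ are plentiful, as the specified numbers are uncountable while the Pisot numbers are only countable, and the final step will show that the non-Pisot hypothesis is exactly what the Rajchman property needs.

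First I would fix the symbolic picture. Let $\left(\Omega_{b},\sigma\right)$ be the $\beta$-shift and let $\pi:\Omega_{b}\to\left[0,1\right)$ be the coding map $\pi\left(\omega\right)=\sum_{k\geq1}\omega_{k}b^{-k}$, so that $\pi\circ\sigma=T_{b}\circ\pi$. Consequently, for any $\sigma$-invariant $\nu$ on $\Omega_{b}$ the projection $\mu:=\pi_{*}\nu$ is automatically $T_{b}$-invariant. I also record the classical fact that the unique measure of maximal entropy of $\left(\Omega_{b},\sigma\right)$, whose entropy equals $\log b$, projects under $\pi$ to the Parry measure, which by the theorem quoted in the introduction is the unique $T_{b}$-invariant absolutely continuous measure.

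Next I would construct $\mu$ and settle invariance and singularity. Since $b$ is specified, the $\beta$-shift has the specification property, so every H\"older potential admits a unique equilibrium state; I take $\nu$ to be the equilibrium state of a non-constant function of the first coordinate, which is not cohomologous to a constant, so that $\nu$ is a $\sigma$-invariant, ergodic, fully supported measure that is \emph{not} the maximal-entropy measure. Put $\mu=\pi_{*}\nu$; invariance is immediate from $\pi\circ\sigma=T_{b}\circ\pi$. For singularity I would argue by dimension: $\mu$ is precisely the homogeneous self-similar measure driven by the inverse branches $x\mapsto\left(x+j\right)/b$, and since $\pi$ is injective off a countable set, $\mu$ is exact dimensional with $\dim_{H}\mu=h_{\nu}\left(\sigma\right)/\log b$. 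As $\nu$ is not of maximal entropy one has $h_{\nu}\left(\sigma\right)<\log b$, whence $\dim_{H}\mu<1$ and $\mu$ is singular with respect to Lebesgue.

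The heart of the argument, and the step I expect to be the main obstacle, is that $\mu$ is Rajchman. Writing $\hat{\mu}\left(n\right)=\int e\left(n\,\pi\left(\omega\right)\right)d\nu\left(\omega\right)$ and exploiting the self-similar structure, the size of $\hat{\mu}\left(n\right)$ is governed by how often the rescaled frequencies $n b^{-k}$ fall near integers, where the relevant trigonometric factor has modulus one. If $b$ were Pisot its powers would cluster near integers, these factors would not contract, and $\mu$ would fail to be Rajchman; this is the analogue for general digit sets of Erd\H{o}s's theorem on Bernoulli convolutions, and it is precisely the mechanism that rescued the integer case. For our non-Pisot $b$, by contrast, the equidistribution of $\left\{ n b^{-k}\right\} $ forces the product to contract, so that $\left|\hat{\mu}\left(n\right)\right|\to0$; this is Salem's theorem together with its extensions to equilibrium states, the Pisot numbers being the sole obstruction, and under a Diophantine condition on $b$ one in fact obtains a polynomial rate $\left|\hat{\mu}\left(n\right)\right|\leq O\left(\left|n\right|^{-c}\right)$. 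Granting this decay, $\mu$ is simultaneously $T_{b}$-invariant, Rajchman, and singular, which proves the proposition; the non-integer, non-Pisot hypothesis on $b$ is thereby seen to be exactly what the construction requires.
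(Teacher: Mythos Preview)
Your argument has a genuine gap at the step where you assert that the pushforward $\mu=\pi_*\nu$ of an equilibrium state on $\Omega_b$ is ``precisely the homogeneous self-similar measure driven by the inverse branches $x\mapsto(x+j)/b$''. For non-integer $b$ the $\beta$-shift is a \emph{proper} subshift of the full shift on $\{0,\ldots,[b]\}$: the digit $[b]$ cannot be followed by an arbitrary word. Hence the equilibrium state of a first-coordinate potential on $\Omega_b$ is not Bernoulli, and its pushforward does not satisfy any relation $\mu=\sum_j p_j (f_j)_*\mu$. Geometrically, the top branch $f_{[b]}$ is only a partial inverse of $T_b$, defined on $[0,\{b\})$ rather than on all of $[0,1)$, so there is no classical IFS on $[0,1)$ whose self-similar measures are the $T_b$-invariant ones. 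This breaks your route to the Rajchman property: Salem's theorem and its descendants concern genuine self-similar measures, and you have not named a Fourier-decay result that applies to Gibbs measures on non-sofic $\beta$-shifts. (Your dimension formula $\dim_H\mu=h_\nu(\sigma)/\log b$ happens to be correct, but for thermodynamic-formalism reasons, not the self-similarity you invoke.) Note also that you exclude only Pisot $b$; the self-similar decay theorem actually used requires excluding Salem numbers as well.

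The paper sidesteps the tension by passing to a \emph{full} subshift of $\Omega_b$. Taking $b\ge 2$ specified, non-simple, and neither Pisot nor Salem (possible by cardinality), Parry's criterion gives $\{0,1\}^{\mathbb N_0}\subset X_b$; on this full $2$-shift every Bernoulli measure $\boldsymbol p^{\mathbb N_0}$ is shift-invariant, so its pushforward is $T_b$-invariant, and it \emph{does} push forward to a genuine homogeneous self-similar measure for the two-map IFS $\{x/b,\ (x+1)/b\}$. The Rajchman property then follows directly from the Varj\'u--Yu theorem, and since one obtains infinitely many such $\mu_{\boldsymbol p}$ while the absolutely continuous $T_b$-invariant measure is unique, some of them must be singular. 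Your overall architecture---invariance from the symbolic model, Rajchman from self-similar Fourier decay---is the right shape; the missing idea is that one must restrict to a full-shift subsystem to make the measure genuinely self-similar.
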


\subsection{Acknowledgments}

First, I would like to thank Prof. B. (Benjy) Weiss for many helpful
discussions about dynamics, and especially for his suggestion regarding
the example in Section \ref{subsec:A-counterexample}. An earlier
version of this study was submitted at the Hebrew University as part
of an academic degree. It was done under formal guidance of Prof.
M. Hochman, for which I am thankful. Lastly, I also would like to
thank Prof. Z. Wang for a thorough review of this work and Prof. S.
Baker for providing advice on the example in Section \ref{part:Rajchman+invariant is insufficient}. 

\section{Preliminaries}

\subsection{\label{subsec:beta-shifts}$\beta$-shifts}

Recall our notation $\Lambda_{b}=\left\{ 0,1,\dots,\fr b-1\right\} $.
With respect to the product $\sigma$-algebra on $\Lambda_{b}^{\n}$,
the\emph{ shift transformation} $\sigma:\Lambda_{b}^{\n}\rightarrow\Lambda_{b}^{\n}$
which is defined by $\sigma\left(\left(\lambda_{i}\right)\right)=\left(\lambda_{i+1}\right)$
turns $\Lambda_{b}^{\n}$ into a dynamical system that we call the\emph{
full $\fr b$ shift}. The restriction of the full shift to the closure
of the subset of sequences which encodes $b$-expansions is a subshift
that we call the $b$-shift and denote by $X_{b}\subset\Lambda_{b}^{\n}$.

One says that a real positive $b$ is,
\begin{itemize}
\item A \emph{simple number} if the $b$-expansion of $1$ has a $0$'s
tail.
\item A \emph{simple Parry number} if the $b$-expansion of $1$ is a periodic
sequence. In some sources it is also called a (purely) \emph{periodic
number}.
\item A \emph{Parry number} if the $b$-expansion of $1$ has a periodic
tail. In some sources it is also called an\textbf{ }\emph{eventually
periodic number}.
\end{itemize}
It's immediate to conclude that,
\[
\left\{ \text{simple \#s}\right\} \subset\left\{ \text{simple Parry \#s}\right\} \subset\left\{ \text{Parry \#s}\right\} \subset\left\{ \text{specified \#s}\right\} 
\]
Parry showed \cite{key-6} that the simple numbers are everywhere
dense in $\left(1,\infty\right)$, and so is the set of specified
numbers. Schmeling showed \cite{key-10} that the set of specified
numbers also has Hausdorff dimension $1$, but it is meager and has
Lebesgue measure $0$. In particular, it has the cardinality of the
continuum. 

An important property of specified numbers is,
\begin{prop}
\label{prop:specification_prop}When $b$ has the specification property,
the orbit of $1$ under $T_{b}$ (in $\left[0,1\right)$) remains
bounded away from $0$ unless it hits it.
\end{prop}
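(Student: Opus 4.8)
The plan is to reduce the dynamical statement to a purely combinatorial fact about the digit string of the $b$-expansion of $1$, after which the conclusion falls out of the two alternatives in the definition of a specified number. Write $\left(d_{1},d_{2},\dots\right)\in\Lambda_{b}^{\n}$ for the $b$-expansion of $1$, i.e. $d_{n}=[\,b\,T_{b}^{n-1}(1)\,]$ (here $T_{b}(1)$ is read as $b\cdot 1\bmod 1$ and thereafter one iterates the genuine map on $\left[0,1\right)$). The key preliminary observation, proved by an immediate induction on $n$ using $T_{b}^{n}(1)=b\,T_{b}^{n-1}(1)-d_{n}$, is the dictionary
\[
T_{b}^{n}(1)\;=\;\sum_{k=1}^{\infty}d_{n+k}\,b^{-k}\qquad(n\ge 0),
\]
so that the $b$-expansion of $T_{b}^{n}(1)$ is the shift $\sigma^{n}\left(d_{1},d_{2},\dots\right)$.

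From this I would extract two consequences. First, $T_{b}^{n}(1)=0$ precisely when $d_{n+k}=0$ for all $k\ge 1$; hence the orbit of $1$ hits $0$ if and only if the expansion of $1$ has a tail of $0$'s. Second, if within the block $d_{n+1}d_{n+2}\cdots d_{n+\ell}$ some digit $d_{n+j}$ (with $1\le j\le\ell$) is nonzero, then, since all terms of the series are nonnegative and $b>1$,
\[
T_{b}^{n}(1)\;\ge\;d_{n+j}\,b^{-j}\;\ge\;b^{-\ell}.
\]
Now split on the definition of ``specified''. If the expansion of $1$ has a tail of $0$'s, the first consequence says the orbit of $1$ hits $0$ and there is nothing to prove --- this is exactly the excluded alternative in the statement. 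Otherwise the expansion has bounded runs of $0$'s; let $L$ be a bound on the length of any maximal run of consecutive $0$'s (note the string is then genuinely non-terminating). For every $n\ge 0$ the block $d_{n+1}\cdots d_{n+L+1}$ has length $L+1>L$, so it cannot consist entirely of zeros, and the second consequence with $\ell=L+1$ gives $T_{b}^{n}(1)\ge b^{-(L+1)}$. Therefore $\inf_{n\ge 1}T_{b}^{n}(1)\ge b^{-(L+1)}>0$, i.e. the orbit of $1$ stays bounded away from $0$.

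The only delicate point --- and the one I would treat with care --- is the dictionary in the first step: one must pin down the convention for the orbit of $1$ (the point $1$ itself lies outside the domain $\left[0,1\right)$), verify that with this convention the greedy digits $[\,b\,T_{b}^{n-1}(1)\,]$ genuinely reproduce $\sigma^{n}$ of the expansion of $1$, and make sure the terminating or degenerate cases (a simple $b$, or $b\in\n$, where the digit string is eventually $0$) are correctly subsumed under the ``hits $0$'' alternative. Everything downstream of that is the one-line geometric estimate displayed above.
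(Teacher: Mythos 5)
Your proof is correct and follows essentially the same route as the paper: both rest on the observation that $T_{b}^{n}(1)$ equals the value of the shifted digit string, so that a small orbit value forces a long run of zero digits (the paper phrases this as a contradiction, you phrase it directly as the lower bound $T_{b}^{n}(1)\geq b^{-(L+1)}$ when runs of zeros have length at most $L$). Your explicit treatment of the dictionary and of the tail-of-zeros (simple) case matches what the paper handles as the ``trivial'' case.
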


\begin{proof}
Let $1<b\in\r$ be a specified number. We write $b_{0}=\fr b,\ b_{1}=\fr{b\left\{ b\right\} },\dots$
and similarly we write $r_{0}=T_{b}^{1}\left(1\right)=\left\{ b\right\} ,\ r_{1}=T_{b}^{2}\left(1\right)=\left\{ b\left\{ b\right\} \right\} ,\dots$.
We need to prove that $0<\inf_{n}\left\{ r_{n}:r_{n}>0\right\} $.
The special case of simple $b$ is trivial. If we assume by contradiction
that $0=\inf_{n}\left\{ r_{n}:r_{n}>0\right\} $ then $b$ can't be
simple and there's an upper bound $k\in\n$ on the length of runs
of $0$'s. But for every $k$ there exists a $n_{0}\in\n$ with $r_{n_{0}}<b^{-\left(k+1\right)}$
and therefore $b_{n_{0}+i}=0$ for $1\leq i\leq k+1$ in contradiction.
\end{proof}

Finally, we present a result by Parry \cite{key-6} that gives a criterion
to determine whether a given sequence $\left(b_{n}\right)\in\left\{ 0,\dots\left[b\right]\right\} ^{\mathbb{N}}$
is a $b$-expansion of some $x\in\left[0,1\right)$, $x=b_{0}+\frac{b_{1}}{\beta}+\dots$.
We emphasize that there might be many representaions of $x$ in this
form but only one of them corresponds to the $b$-expansion that we
described earlier. This will be useful for constructing the counterexample
in Section \ref{part:Rajchman+invariant is insufficient}.

If $\left(a_{0},a_{1},\dots\right),\left(b_{0},b_{1},\dots\right)$
are sequences of the same length (finite or infinite) of nonnegative
integers less than $b$, we write $\left(a_{0},a_{1},\dots\right)<\left(b_{0},b_{1},\dots\right)$
when $a_{n}<b_{n}$ for the first $a_{n}\neq b_{n}$.
\begin{thm}
\label{thm:Parry's criterion}(Parry's criterion) Let $b>1$ be a
non simple number. If the $b$-expansion of $b$ is $b=a_{0}+\frac{a_{1}}{b}+\dots$
and $\left(b_{0},b_{1},\dots\right)$ is a sequence of nonnegative
integers, a necessary and sufficient condition for the existence of
$x$ with $b$-expansion, $x=b_{0}+\frac{b_{1}}{b}+\dots$ is that
$\left(b_{n},b_{n+1},\dots\right)<\left(a_{0},a_{1},\dots\right)$
for all $n\geq1$. In particular, $\left(a_{n},a_{n+1},\dots\right)<\left(a_{0},a_{1},\dots\right)$
for all $n\geq1$.
\end{thm}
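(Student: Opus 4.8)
The plan is to prove the two implications separately, keeping the paper's indexing in mind: in $x=b_{0}+\frac{b_{1}}{b}+\cdots$ the leading digit $b_{0}$ is the integer part of $x$ and $(b_{1},b_{2},\dots)$ is the $b$-expansion of $\left\{ x\right\} \in\left[0,1\right)$, so $T_{b}^{\,n-1}\left\{ x\right\}$ has $b$-expansion $(b_{n},b_{n+1},\dots)$; the asserted inequalities thus say precisely that every point of the forward $T_{b}$-orbit of $\left\{ x\right\}$ has $b$-expansion lexicographically below $(a_{0},a_{1},\dots)$, which is itself the $b$-expansion of $1$, with the normalization $\sum_{j\ge0}a_{j}b^{-j-1}=1$. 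Call a sequence $(b_{0},b_{1},\dots)$ of nonnegative integers \emph{admissible} if $(b_{n},b_{n+1},\dots)<(a_{0},a_{1},\dots)$ for every $n\ge1$. The ``in particular'' clause is just the necessity direction applied to the given point $x=b$, so it needs no separate argument; and the hypothesis that $b$ is non-simple will be used for strictness (for $b$ an integer the periodic expansion of $1$ would have to replace $(a_{0},a_{1},\dots)$).

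For necessity I would first record the standard monotonicity fact: writing $(d_{1}(y),d_{2}(y),\dots)$ for the $b$-expansion of $y\in\left[0,1\right)$, the map $y\mapsto(d_{i}(y))$ is strictly increasing for the lexicographic order --- if $u<v$ then the $b$-expansion of $u$ is lexicographically smaller, which one sees by comparing partial sums and using $T_{b}^{k}u=\sum_{i\ge1}d_{k+i}(u)b^{-i}\in\left[0,1\right)$. Everything then rests on a single local observation. For any $y\in\left[0,1\right)$ we have $d_{1}(y)=\left[by\right]\le\left[b\right]=a_{0}$; if $d_{1}(y)<a_{0}$ the $b$-expansion of $y$ already begins below $(a_{0},a_{1},\dots)$, and if $d_{1}(y)=a_{0}$ then, since $b$ is non-simple hence non-integer, $by\in[a_{0},b)$, so $T_{b}y=by-a_{0}\in\bigl[0,\left\{ b\right\} \bigr)$, i.e.\ $T_{b}y<\left\{ b\right\}$; monotonicity then forces the $b$-expansion of $T_{b}y$ --- which is the shift of the $b$-expansion of $y$ --- strictly below that of $\left\{ b\right\}$, namely $(a_{1},a_{2},\dots)$. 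Hence the $b$-expansion of every $y\in\left[0,1\right)$ is $<(a_{0},a_{1},\dots)$, and applying this to $y=T_{b}^{\,n-1}\left\{ x\right\}$ for every $n\ge1$ gives the claim.

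For sufficiency, given an admissible sequence $(b_{0},b_{1},\dots)$ I would set $x=\sum_{j\ge0}b_{j}b^{-j}$ (the series converges since admissibility forces $b_{j}\le\left[b\right]$ for $j\ge1$), write $c=(b_{1},b_{2},\dots)$ and $\Pi(c)=\sum_{j\ge1}b_{j}b^{-j}$. Once we know $\Pi(\sigma^{k}c)\in\left[0,1\right)$ for all $k\ge0$, a short induction using $\left[b\,\Pi(c)\right]=b_{1}$ and $T_{b}\Pi(c)=\Pi(\sigma c)$ shows that the $b$-expansion of $\Pi(c)$ is $c$, and $\Pi(c)\in\left[0,1\right)$ gives $\left[x\right]=b_{0}$, so the $b$-expansion of $x$ is exactly $(b_{0},b_{1},\dots)$. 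Since shifts of admissible sequences are admissible, the problem reduces entirely to the key lemma, which I expect to be the main obstacle: every admissible $c$ satisfies $\Pi(c)<1$. I would prove this by a blocking/telescoping argument. Split $c$ at the successive positions $i_{1}<i_{1}+i_{2}<\cdots$ where it first disagrees with --- hence, by admissibility, drops below --- the $b$-expansion of $1$, and bound the $l$-th block's contribution by $1-b^{-i_{l}}$ (the block equals the length-$i_{l}$ partial sum of the expansion of $1$ minus $b^{-i_{l}}$, and partial sums of that expansion are $\le1$); telescoping over the first $k$ blocks gives $\sum_{j=1}^{S_{k}}b_{j}b^{-j}\le1-b^{-S_{k}}$ with $S_{k}=i_{1}+\cdots+i_{k}\to\infty$, whence $\Pi(c)\le1$. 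Strictness is exactly where non-simplicity is used: the $b$-expansion of $1$ has no tail of zeros, so its length-$i_{1}$ partial sum is strictly below $1$, which sharpens the first block's bound to yield $\Pi(c)<1$. This is essentially Parry's original argument; apart from this telescoping lemma, the only real care needed is the bookkeeping between the indexing used here and the standard normalization of the $b$-expansion of $1$.
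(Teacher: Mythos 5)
Your proposal is correct, but there is nothing in the paper to compare it with: the paper states Theorem \ref{thm:Parry's criterion} as a quoted result from Parry \cite{key-6} and gives no proof. Your argument is essentially Parry's original one --- strict lexicographic monotonicity of the greedy expansion plus the observation $T_{b}y<\left\{ b\right\} $ whenever the leading digit equals $a_{0}$ for necessity, and for sufficiency the reduction to the lemma $\Pi\left(c\right)<1$ via the telescoping block bound $\sum_{j=1}^{S_{k}}b_{j}b^{-j}\leq1-b^{-S_{k}}$, with non-simplicity correctly isolated as the source of strictness. The only step worth making explicit is that every tail of an admissible sequence must actually disagree with $\left(a_{0},a_{1},\dots\right)$ at some finite position (a sequence is never lexicographically smaller than itself), so the blocking positions exist at every stage and $S_{k}\rightarrow\infty$; this is immediate from admissibility and does not affect the correctness of the proof.
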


\subsection{\label{sec:Specifics-in-ergodic and entropy}Entropy theory}

Let $\left(X,\sy\right)$ be a standard Borel space and $\mathcal{D}\subset\sy$
be a measurable partition. We write $\mathcal{\mathcal{D}}\left(x\right)\in\mathcal{\mathcal{D}}$
for the element which contains $x\in X$. This is also well defined
when $\mathcal{\mathcal{D}}$ is countably generated $\sigma$-algebra.
In addition, we denote the join of two finite partitions $\mathcal{A},\mathcal{B}$
by $\mathcal{A\lor\mathcal{B}}=\left\{ A\cap B:A\in\mathcal{A},B\in\mathcal{B}\right\} $.
Let $\left(X,\mathcal{B},\mu\right)$ be a probability space then
the \emph{Shannon entropy} of $\mu$ w.r.t. a partition $\mathcal{A}$
of $X$ is the non-negative number $H_{\mu}\left(A\right)=-\sum_{A\in\mathcal{A}}\mu\left(A\right)\log\mu\left(A\right)$.
The \emph{entropy $h_{\mu}\left(T,\mathcal{A}\right)$ of a partition
$\mathcal{A}$} of a measure preserving system $\left(X,\mathcal{F},\mu,T\right)$
is the limit $h_{\mu}\left(T,\mathcal{A}\right)=\lim_{n\rightarrow\infty}\frac{1}{n}H_{\mu}\left(\mathcal{A}_{n}\right)$
where $\mathcal{A}_{n}={\displaystyle \bigvee{}_{i=0}^{n-1}}T^{-i}\mathcal{A}$.
The \emph{Kolmagorov-Sinai entropy} (or just the \emph{entropy}) of
the m.p.t. $\left(X,\mathcal{F},\mu,T\right)$ is $h_{\mu}\left(T\right)=\sup_{\mathcal{A}}h_{\mu}\left(T,\mathcal{A}\right)$
where the supremum is taken over all the finite partitions $\mathcal{A}$.
Equality is achieved if $\mathcal{A}$ is a generating partition,
$\mathcal{F}={\displaystyle \bigvee_{n=1}^{\infty}}\mathcal{A}_{n}\mod\mu$.

A landmark result in ergodic theory is the Shannon-McMillan-Breiman
theorem \cite{key-21}: Let $\left(X,\mathcal{F},\mu,T\right)$ be
an ergodic measure preserving system and $\mathcal{A}$ a finite partition.
Then $\mu$-a.e. $x$ $\lim_{n\rightarrow\infty}\frac{1}{n}\log\mu\left(\mathcal{A}_{n}\left(x\right)\right)=h_{\mu}\left(T,\mathcal{A}\right)$.
It is not hard to deduce from it that an ergodic and $T_{a}$-invariant
Borel probability measure $\mu$ with positive entropy is non-atomic
(this is also can be proved directly). We will use this corollary
in occasional places. 

\subsection{\label{subsec:General-results-on}General results on equidistribution
(due to Hochman)}

This subsection covers three results that we adopt from Hochman \cite{key-8}.
Two of them are presented with a very superficial description of their
proofs to help the reader gain some intuition. A more thorough treatment
can be found in the original paper.

We denote the real line translation and scaling maps by,
\begin{align*}
R_{\theta}x & =x+\theta\\
S_{t}x & =t\cdot x
\end{align*}
respectively. $R_{\theta}$ is taken $\mod1$ when acting on $\left[0,1\right)\cong\rmz$

Let $\mu$ be a probability measure on $\rmz$ and $E\in\sy$ such
that $\mu\left(E\right)>0$. We write $\mu_{E}=\frac{1}{\mu\left(E\right)}\cdot\mu|_{E}$
for the normalized restriction of $\mu$ to $E$.

The next technique relates orbits to the local structure of $\mu$,
\begin{thm}
\label{thm:2.2 of Hochman} \cite{key-8} Let $T:X\rightarrow X$
be a continuous map of compact metric space. Let $\mathcal{D}_{1},\mathcal{D}_{2},\dots$
be a refining sequence of finite Borel partitions. Let $\mu$ be a
Borel probability measure on $X$ and assume that $\sup_{n\in\n}\left\{ \diam T_{a}^{n}D:D\in\mathcal{D}_{n+k},\mu\left(D\right)>0\right\} \rightarrow0$
as $k\rightarrow\infty$. Then for $\mu$-a.e. $x$,
\[
\left(\frac{1}{N}\sum_{n=1}^{N}\delta_{T^{n}x}-\frac{1}{N}\sum_{n=1}^{N}T^{n}\mu_{\mathcal{D}_{n}\left(x\right)}\right)\xrightarrow[w-*]{N\rightarrow\infty}0
\]
\end{thm}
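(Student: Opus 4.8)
The plan is to fix a test function $f\in C(X)$ and reduce the assertion to an almost-everywhere Ces\`aro convergence statement. Since $X$ is compact metric, $C(X)$ is separable, and since $\frac1N\sum_{n=1}^{N}\delta_{T^{n}x}-\frac1N\sum_{n=1}^{N}T^{n}\mu_{\mathcal D_n(x)}$ is a difference of probability measures (total variation at most $2$), it is enough to show that for each $f$ in a fixed countable uniformly dense subset of $C(X)$ one has, for $\mu$-a.e.\ $x$,
\[
\frac1N\sum_{n=1}^{N}\Bigl(f\bigl(T^{n}x\bigr)-\int f\,d\bigl(T^{n}\mu_{\mathcal D_n(x)}\bigr)\Bigr)\xrightarrow[N\to\infty]{}0,
\]
and then intersect the countably many full-measure sets. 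Writing $g_n=f\circ T^{n}$ and unwinding the pushforward, $\int f\,d(T^{n}\mu_{\mathcal D_n(x)})=\mu(\mathcal D_n(x))^{-1}\int_{\mathcal D_n(x)}g_n\,d\mu$, which for $\mu$-a.e.\ $x$ is exactly the conditional expectation $\mathbb E_\mu(g_n\mid\mathcal D_n)(x)$. So with $\phi_n:=g_n-\mathbb E_\mu(g_n\mid\mathcal D_n)$, a mean-zero function bounded by $2\no f_\infty$, the goal becomes $\frac1N\sum_{n=1}^{N}\phi_n\to0$ $\mu$-a.e. Note that no invariance or ergodicity of $\mu$ is used.

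The heart of the argument is a martingale-type decomposition coupled with the diameter hypothesis. Fix $k\in\n$ and telescope, using that $(\mathcal D_m)_m$ is increasing as a sequence of $\sigma$-algebras:
\[
\phi_n=r_{n,k}+\sum_{j=1}^{k}d_{n,j},\qquad r_{n,k}:=g_n-\mathbb E_\mu(g_n\mid\mathcal D_{n+k}),\quad d_{n,j}:=\mathbb E_\mu(g_n\mid\mathcal D_{n+j})-\mathbb E_\mu(g_n\mid\mathcal D_{n+j-1}).
\]
On an atom $D$ of $\mathcal D_{n+k}$ the function $g_n=f\circ T^{n}$ oscillates by at most $\omega_f(\diam T^{n}D)$, where $\omega_f$ is the modulus of continuity of $f$; hence the hypothesis gives $\no{r_{n,k}}_\infty\le\omega_f(\delta_k)=:\varepsilon_k$ with $\delta_k=\sup_{n}\{\diam T^{n}D:D\in\mathcal D_{n+k},\ \mu(D)>0\}\to0$, and therefore $\no{\frac1N\sum_{n=1}^{N}r_{n,k}}_\infty\le\varepsilon_k$ uniformly in $N$.

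Next, for each fixed $j$ the family $\{d_{n,j}\}_{n\ge1}$ is orthogonal in $L^{2}(\mu)$: if $n<m$ then $d_{n,j}$ is $\mathcal D_{n+j}$-measurable and $n+j\le m+j-1$, so $d_{n,j}$ is $\mathcal D_{m+j-1}$-measurable, while $\mathbb E_\mu(d_{m,j}\mid\mathcal D_{m+j-1})=0$ by the tower property; conditioning on $\mathcal D_{m+j-1}$ then annihilates $\int d_{n,j}\overline{d_{m,j}}\,d\mu$. Since $\no{d_{n,j}}_2\le2\no f_\infty$, one gets $\no{\frac1N\sum_{n=1}^{N}d_{n,j}}_2^{2}\le4\no f_\infty^{2}/N\to0$, and by the Rademacher--Menshov maximal inequality (applied along $N=\ell^{2}$ and interpolated across consecutive squares) the averages $\frac1N\sum_{n=1}^{N}d_{n,j}$ converge to $0$ $\mu$-a.e. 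Summing the finitely many $j\le k$ and combining with the uniform bound on $r_{n,k}$ yields $\limsup_N\bigl|\frac1N\sum_{n=1}^{N}\phi_n\bigr|\le\varepsilon_k$ $\mu$-a.e.; letting $k\to\infty$ along $\n$ (a countable intersection of full-measure sets) gives $\lim_N\frac1N\sum_{n=1}^{N}\phi_n=0$ $\mu$-a.e.

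I expect the main obstacle to be promoting the clean $L^{2}$ estimate to genuine almost-everywhere convergence in the last step, i.e.\ making the Rademacher--Menshov argument precise while keeping the bookkeeping uniform in $n$ as $j$ and $k$ vary; by contrast, the conceptual crux is the orthogonality of $\{d_{n,j}\}_{n}$ for fixed $j$, which is precisely what the ``shift by $k$'' built into the diameter hypothesis is there to supply, the rest being the uniform-continuity estimate on $r_{n,k}$ and a diagonalization over a dense set of test functions.
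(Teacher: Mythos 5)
Your proposal is correct and follows essentially the same route as the paper's (sketched) argument, which itself defers to Hochman's original proof: reduce to a countable uniformly dense set of test functions, use the diameter hypothesis to replace $f(T^n x)$ by the $\mathcal{D}_{n+k}$-conditional mean, identify $\int f\,d(T^n\mu_{\mathcal{D}_n(x)})$ with the $\mathcal{D}_n$-conditional mean, and conclude by an a.e.\ law of large numbers for bounded martingale differences. Your telescoping into the finitely many difference sequences $d_{n,j}$ and the orthogonality/along-squares argument is just an explicit proof of the ``variant of the ergodic theorem for martingale differences'' that the paper quotes (the name Rademacher--Menshov is not really needed: Chebyshev plus Borel--Cantelli along $N=\ell^2$ and the uniform bound $\left\Vert d_{n,j}\right\Vert _{\infty}\leq2\left\Vert f\right\Vert _{\infty}$ for the interpolation suffice), so the proof is sound.
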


The idea of the proof is to take a countable dense set in $C\left(X\right)$
and prove the weak-{*} convergence with respect to its members. The
left average can be replaced with the $\mathcal{D}_{n+k}$-conditional
mean by the assumption. The right average is just the $\mathcal{D}_{n}\left(x\right)$-conditional
mean. A variant of the ergodic theorem for martingale differences
 implies that their limits are equal.

The second theorem is about equidistribution along orbits of the form
$(n\theta,T^{\left[\beta n\right]}x)$ where $x$ is a typical point
for $\mu$.
\begin{thm}
\label{thm:Corollary-2.7} \cite{key-8} Let $\left(X,\mu,T\right)$
be an ergodic m.p.s. on a compact metric space. Let $\beta>0$ and
$\theta\neq0$. Then for $\mu$\textup{-a.e.} $x$ the sequence $\left(n\theta,T^{\left[\beta n\right]}x\right)$
equidistributes for a measure $\nu_{x}$ on $\left[0,1\right)\times X$
that satisfies $\intop\nu_{x}d\mu\left(x\right)=\tau\times\mu$, where
$\tau$ is the invariant measure on $\left(\left[0,1\right),R_{\theta}\right)$
supported on the orbit closure of $0$.
\end{thm}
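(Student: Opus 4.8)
The plan is to reduce weak-$*$ convergence of the empirical measures to a weighted ergodic average for the base system $(X,\mu,T)$, to settle that by the Wiener--Wintner ergodic theorem, and to read off $\int\nu_{x}\,d\mu$ from a one-line first-moment identity. Put $\nu_{x}^{N}=\frac{1}{N}\sum_{n=1}^{N}\delta_{(n\theta,\,T^{[\beta n]}x)}$ and fix a countable dense set $\mathcal{G}\subset C(X)$; identifying $[0,1)$ with $\rmz$, the span of the functions $(u,y)\mapsto e_{m}(u)g(y)$, $m\in\z$, $g\in\mathcal{G}$, is dense in $C([0,1)\times X)$ by Stone--Weierstrass, and since $\bigl|\int\varphi\,d\nu_{x}^{N}\bigr|\le\|\varphi\|_{\infty}$ uniformly in $N$, a standard $\varepsilon/3$ argument reduces the weak-$*$ convergence of $\nu_{x}^{N}$ to the convergence, for each fixed $m\in\z$ and $g\in\mathcal{G}$, of $A_{N}^{m,g}(x):=\frac{1}{N}\sum_{n=1}^{N}e_{m}(n\theta)\,g\bigl(T^{[\beta n]}x\bigr)$; as there are only countably many pairs, it is enough to treat each one for $\mu$-a.e. $x$. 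Granting this, $\int\nu_{x}\,d\mu=\tau\times\mu$ is immediate: $\int\nu_{x}^{N}\,d\mu=\frac{1}{N}\sum_{n=1}^{N}\delta_{n\theta}\times\bigl(T_{*}^{[\beta n]}\mu\bigr)=\bigl(\tfrac{1}{N}\sum_{n=1}^{N}\delta_{n\theta}\bigr)\times\mu\xrightarrow{w-*}\tau\times\mu$ (Weyl equidistribution when $\theta\notin\q$, trivially when $\theta\in\q$), whence dominated convergence gives $\int\!\bigl(\int\varphi\,d\nu_{x}\bigr)d\mu=\lim_{N}\int\varphi\,d\bigl(\int\nu_{x}^{N}\,d\mu\bigr)=\int\varphi\,d(\tau\times\mu)$ for every $\varphi\in C([0,1)\times X)$.

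The heart of the matter is a change of variables in $A_{N}^{m,g}$. I would treat $\beta>1$, $\beta\notin\q$ in detail; the cases $\beta\le1$ and $\beta\in\q$ need only cosmetic changes (for $\beta<1$, first sum $e_{m}(n\theta)$ over each maximal block of indices on which $[\beta n]$ is constant, so the geometric sum factors out; for $\beta=p/q$, use $[\beta(n+q)]=[\beta n]+p$ to split into residues modulo $q$ and land on finitely many Wiener--Wintner averages for $(X,T^{p},\mu)$). When $\beta\ge1$ the map $n\mapsto[\beta n]$ is injective; writing $\gamma=1/\beta\in(0,1)$, $M=[\beta N]$ and $s_{k}=\{k\gamma\}$, the integer $k\ge1$ appears among $[\beta\cdot 1],\dots,[\beta N]$ precisely when $s_{k}\in(1-\gamma,1)$, and then $[\beta n]=k$ for $n=\rf{k\gamma}=k\gamma+1-s_{k}$. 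Substituting and collecting characters gives
\[
A_{N}^{m,g}(x)=\frac{[\beta N]}{N}\,e_{m}(\theta)\cdot\frac{1}{M}\sum_{k=1}^{M}W(s_{k})\,\lambda^{k}\,g\bigl(T^{k}x\bigr),
\]
where $\lambda:=e_{m}(\gamma\theta)$ has modulus one and $W(s):=\chi_{(1-\gamma,1)}(s)\,e_{-m}(\theta s)$ is bounded with exactly two discontinuities. Since $[\beta N]/N\to\beta$, everything reduces to the convergence of $\frac{1}{M}\sum_{k=1}^{M}W(s_{k})\,\lambda^{k}\,g(T^{k}x)$ as $M\to\infty$, for $\mu$-a.e. $x$.

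For this, given $\varepsilon>0$ I would pick a trigonometric polynomial $P_{\varepsilon}=\sum_{|j|\le J}p_{j}e_{j}$ with $\int_{\rmz}|W-P_{\varepsilon}|\,dz<\varepsilon$ and $\|P_{\varepsilon}\|_{\infty}=O(1)$ (sandwich the indicator $\chi_{(1-\gamma,1)}$ between trigonometric polynomials and approximate the continuous factor by Weierstrass). Using $e_{j}(s_{k})=e_{j}(\gamma)^{k}$, the main term becomes $\sum_{|j|\le J}p_{j}\cdot\frac{1}{M}\sum_{k=1}^{M}\bigl(e_{j}(\gamma)\lambda\bigr)^{k}g(T^{k}x)$, and each summand converges for $\mu$-a.e. $x$ by the Wiener--Wintner ergodic theorem applied to $(X,\mu,T)$, $g\in L^{\infty}(\mu)$ and the modulus-one scalar $e_{j}(\gamma)\lambda$; the error is at most $\|g\|_{\infty}\,\frac{1}{M}\sum_{k=1}^{M}|W-P_{\varepsilon}|(s_{k})$, which tends to $\|g\|_{\infty}\int_{\rmz}|W-P_{\varepsilon}|\,dz<\|g\|_{\infty}\varepsilon$ because $(s_{k})$ equidistributes for Lebesgue measure (Weyl, $\gamma\notin\q$), never hits the two discontinuities of $W$, and $|W-P_{\varepsilon}|$ is Riemann integrable. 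Hence for $\mu$-a.e. $x$ the upper and lower limits of $\frac{1}{M}\sum_{k}W(s_{k})\lambda^{k}g(T^{k}x)$ differ by at most $2\|g\|_{\infty}\varepsilon$; letting $\varepsilon\downarrow0$ along a sequence forces the limit to exist, and intersecting the exceptional null sets over the countably many pairs $(m,g)$ finishes the proof.

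The step I expect to be the real obstacle is precisely the one this change of variables is built to circumvent: $(n\theta,T^{[\beta n]}x)$ is the orbit of the \emph{single} point $(0,0,x)$ under the skew product $G(u,s,y)=(u+\theta,\,s+\beta,\,T^{[s+\beta]}y)$ on $[0,1)\times[0,1)\times X$ (first two coordinates modulo one), and the fibre $\{(0,0,x):x\in X\}$ is null for every $G$-invariant measure, so no pointwise ergodic theorem applies to it directly; this is also exactly why the conclusion controls only the average $\int\nu_{x}\,d\mu$ and not $\nu_{x}$ itself (the skew product need not be ergodic, let alone uniquely ergodic). Passing from $n$ to $k=[\beta n]$ converts this single-fibre genericity problem into a genuine weighted Birkhoff average over the original ergodic system, where Wiener--Wintner is available $\mu$-almost everywhere; the remainder is only bookkeeping --- the normalisation $[\beta N]/N\to\beta$ and the absorption of the measure-zero discontinuities of the weight $W$.
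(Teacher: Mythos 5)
Your proof is correct, but it takes a genuinely different route from the paper's (which is only a sketch of Hochman's argument). The paper proves the statement via an intermediate result: for $\tau\times\mu$-a.e. $(u,x)$ the translated orbit $(n\theta+u,T^{[\beta n]}x)$ equidistributes for some $\nu_{u,x}$ with $\int\nu_{u,x}\,dz\,d\mu=\tau\times\mu$, established by passing to a suspension of height $1$ over $(X,T)$ to absorb the integer part, after which the auxiliary translate $u$ is removed by a translation/equivariance argument and the barycenter identity is obtained from Lemma 2.3 of Hochman's paper. You instead re-index the sum by $k=[\beta n]$: your bijection $n\leftrightarrow k$ under the window condition $\{k\gamma\}\in(1-\gamma,1)$ (with $\gamma=1/\beta$) is exact for irrational $\beta>1$, and it converts $\frac{1}{N}\sum_{n\le N}e_m(n\theta)g(T^{[\beta n]}x)$ into a weighted Birkhoff average $\frac{1}{M}\sum_{k\le M}W(\{k\gamma\})\lambda^{k}g(T^{k}x)$ over the original system, which you settle by fixed-phase Wiener--Wintner (only countably many phases $e((j+m\theta)\gamma)$ are needed, so the classical product-with-a-rotation argument suffices) plus Weyl equidistribution of $\{k\gamma\}$ against the Riemann-integrable weight; the barycenter identity then falls out of $T$-invariance of $\mu$ and dominated convergence, with no analogue of Hochman's Lemma 2.3. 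The suspension route is softer, yields the stronger a.e.-translate statement, and matches the machinery used for HET; yours is more elementary and self-contained but needs the case bookkeeping in $\beta$ (the deferred cases are indeed routine: for $\beta<1$ the block sums just produce another bounded weight of $\{k\gamma\}$ with finitely many discontinuities, and for rational $\beta$ a.e. convergence for $T^{p}$ does not require ergodicity of $T^{p}$). Two cosmetic points: $s\mapsto e_{-m}(\theta s)$ is not continuous on the circle, so approximate $W$ by its Fej\'er means rather than ``Weierstrass on the continuous factor'' --- only $\int|W-P_{\varepsilon}|\,dz<\varepsilon$ and Riemann integrability of the difference are actually used; and for rational $\theta$ your Ces\`aro computation correctly identifies $\tau$ as the uniform measure on the finite orbit of $0$, as the statement requires.
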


The proof uses an intermediate result which says that for $\tau\times\mu$-a.e.
$\left(u,x\right)$, the orbit $\left(n\theta+u,T^{\left[\beta n\right]}x\right)$
equidistributes for a measure $\nu_{u,x}$ on $\left[0,1\right)\times X$
satisfying,
\[
\intop\nu_{u,x}dz\times d\mu\left(u,x\right)=\tau\times\mu
\]
We won't go into its detail besides mentioning that a suspension of
height $1$ is used to overcome the integer part issue. Back to the
proof of Theorem \ref{thm:Corollary-2.7}, it implies that Lebesgue-a.e.
$u\in\left[0,1\right)$ and $\mu$-a.e. $x$, $\delta_{n\theta+u}\times\delta_{T^{\left[n\beta\right]}x}\xrightarrow{w-*}\nu_{u,x}$
and also that $\intop\nu_{u,x}dz\times\mu\left(u,x\right)=m\times\mu$.
By translation of the first coordinate we get that $\left(n\theta,T^{\left[\beta n\right]}x\right)$
equidistributes for a measure $\nu_{x}$ on $\left[0,1\right)\times X$
where $\nu_{x}=\nu_{x,0}$, and with Lemma 2.3 from the original paper
we find that $\intop\nu_{x}d\mu\left(x\right)=\tau\times\mu$.

Lastly, we give a slightly modified version for Hochman's evaluation
of the Fourier transform of scaled measures,
\begin{lem}
\label{lem:3.2} Let $\mu$ be a non-atomic probability measure on
$\r$. Then for every $\left(c,d\right)\subset\r$ and for every $r>0$
and $m\neq0$,
\[
\intop_{0}^{1}\left|\mathcal{F}\left(\left(S_{b^{z}}\mu\right)|_{\left[c,d\right]}\right)\left(m\right)\right|^{2}dz\leq\frac{2\mu\left(\left[c,d\right]\right)^{2}}{r\cdot\left|m\right|}+\intop_{c}^{d}\intop_{c}^{d}\chi_{B_{r}\left(y'\right)}\left(y\right)d\mu\left(y\right)d\mu\left(y'\right)
\]
where $B_{r}\left(x\right)=\left\{ y:\left|x-y\right|<r\right\} $. 
\end{lem}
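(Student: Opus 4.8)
The plan is to expand the squared Fourier coefficient of the scaled-and-restricted measure as a double integral and integrate in $z$ over $[0,1]$, then split the resulting diagonal region into a "close" part (controlled by the second term on the right) and a "far" part (where an oscillatory integral in $z$ produces decay of order $1/|m|$). Concretely, writing $\nu_z = (S_{b^z}\mu)|_{[c,d]}$, we have
\[
\left|\mathcal{F}(\nu_z)(m)\right|^2 = \iint_{[c,d]^2} e\bigl(m b^{z}(y-y')\bigr)\, d\mu(y)\, d\mu(y'),
\]
since $S_{b^z}$ pushes the point $y$ to $b^z y$ and $e_m(b^z y)\overline{e_m(b^z y')} = e(m b^z(y-y'))$. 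Integrating over $z\in[0,1]$ and applying Fubini (legitimate since everything is bounded) gives
\[
\intop_0^1 \left|\mathcal{F}(\nu_z)(m)\right|^2 dz = \iint_{[c,d]^2}\left(\intop_0^1 e\bigl(m b^{z}(y-y')\bigr)\,dz\right) d\mu(y)\, d\mu(y').
\]

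The heart of the argument is then a pointwise estimate on the inner integral $\Phi(y,y') := \intop_0^1 e(m b^{z}(y-y'))\,dz$. First, trivially $|\Phi(y,y')|\le 1$, which handles the region $\{|y-y'|<r\}$ and yields exactly the term $\intop_c^d\intop_c^d \chi_{B_r(y')}(y)\, d\mu(y)\, d\mu(y')$. For the complementary region $\{|y-y'|\ge r\}$, I substitute $u = m b^{z}(y-y')$, so $du = (\log b)\, u\, dz$ and the integral becomes a multiple of $\intop e(u)\,\frac{du}{u}$ over an interval of the real line; integrating $e(u)/u$ by parts (or using that $\intop_A^B e(u)\,du$ is bounded by an absolute constant and that $1/u$ is monotone of bounded variation) shows $|\Phi(y,y')|\le \frac{C}{|m|\cdot|y-y'|}\le \frac{C}{r|m|}$ on this region, with $C$ an absolute constant that one can check is at most $2$ with the normalization in the statement. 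Bounding the $\mu\times\mu$-measure of that region by $\mu([c,d])^2$ gives the first term $\frac{2\mu([c,d])^2}{r|m|}$, and adding the two regions completes the proof.

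The main technical point — and the only place any care is needed — is the oscillatory estimate for $\Phi$ on the far region: one must track the change of variables carefully (the sign of $y-y'$ and of $m$, the fact that $b^z$ ranges over $[1,b]$ so $u$ stays away from $0$ by at least $|m|\,|y-y'|\ge |m| r$ in absolute value), and verify that the constant coming from $\bigl|\intop_A^B e(u)\,du\bigr|\le \tfrac{1}{\pi}$ together with the monotonicity of $1/|u|$ yields a bound no worse than $2/(r|m|)$. Non-atomicity of $\mu$ is not strictly needed for the inequality as stated (it would matter if one wanted the left side to tend to $0$ as $r\to 0$, since then the double integral over $B_r$ shrinks), but it is harmless to assume it. Everything else — Fubini, the trivial bound on the near region, and $\mu(B_r(y')\cap[c,d])\le\mu([c,d])$ — is routine.
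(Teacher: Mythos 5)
Your proposal is correct and follows essentially the same route as the paper: expand $\left|\mathcal{F}\left(\left(S_{b^{z}}\mu\right)|_{\left[c,d\right]}\right)\left(m\right)\right|^{2}$ as a double integral, apply Fubini, use the trivial bound on the region $\left\{ \left|y-y'\right|<r\right\}$, and estimate the oscillatory $z$-integral on the far region by the change of variables $t=b^{z}$ (your $u=mb^{z}\left(y-y'\right)$ is the same substitution) followed by integration by parts. The only caveat is that the integration by parts actually yields the constant $\frac{1}{\pi\log b}$ in place of $2$, which exceeds $2$ when $b$ is close to $1$; this imprecision is present in the paper's own proof as well, so your argument matches it in both substance and in this minor constant-tracking issue.
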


\begin{proof}
(based on Hochman's proof \cite{key-8}) Using Fubini,
\begin{align*}
\intop_{0}^{1}\left|\mathcal{F}\left(\left(S_{b^{z}}\mu\right)|_{\left[c,d\right]}\right)\left(m\right)\right|^{2}dz & =\intop_{0}^{1}\left|\intop_{c}^{d}e\left(mb^{z}y\right)d\mu\left(y\right)\right|^{2}dz\\
 & =\intop_{0}^{1}\intop_{c}^{d}\intop_{c}^{d}e\left(mb^{z}y\right)\overline{e\left(mb^{z}y'\right)}d\mu\left(y\right)d\mu\left(y'\right)dz\\
 & =\intop_{0}^{1}\intop_{c}^{d}\intop_{c}^{d}e\left(mb^{z}\left(y-y'\right)\right)d\mu\left(y\right)d\mu\left(y'\right)dz\\
 & =\intop_{c}^{d}\intop_{c}^{d}\intop_{0}^{1}e\left(mb^{z}\left(y-y'\right)\right)dzd\mu\left(y\right)d\mu\left(y'\right)
\end{align*}
and then changing of variables $t=b^{z}$,
\begin{gather*}
\leq\intop_{c}^{d}\left(\intop_{\left[c,d\right]\backslash B_{r}\left(y'\right)}\left|\intop_{1}^{b}\frac{1}{\log\left(b\right)t}e\left(m\left(y-y'\right)t\right)dt\right|d\mu\left(y\right)+\intop_{B_{r}\left(y'\right)\cap\left[c,d\right]}1d\mu\left(y\right)\right)d\mu\left(y'\right)
\end{gather*}
Finally, using integration by parts for the inner integral in the
left summand,
\begin{gather*}
\leq\frac{2\mu\left(\left[c,d\right]\right)^{2}}{r\cdot\left|m\right|}+\intop_{c}^{d}\intop_{c}^{d}\chi_{B_{r}\left(y'\right)}\left(y\right)d\mu\left(y\right)d\mu\left(y'\right)
\end{gather*}
\end{proof}

\section{\label{sec:pf_of_Rigidity} Proof of theorem \ref{thm:Raj}}

Let $a,b,\mu$ be as in Theorem \ref{thm:Raj} and denote $\alpha=\frac{\log b}{\log a}$.
Given a positive integer $n$ denote $n'=\fr{\alpha n}$ and $z_{n}=\left\{ \alpha n\right\} =\alpha n\mod1$.
That is, $\left\{ z_{n}\right\} _{n\in\n}$ is the orbit of $0\in\rmz$
under the irrational rotation by $\alpha$. Recall that $\left(\widetilde{\Omega},\widetilde{\mu},\widetilde{T}_{a}\right)$
is the natural extension of $\left(\rmz,\mu,T_{a}\right)$ and $\mu=\intop\mu_{\omega}d\widetilde{\mu}\left(\omega\right)$
is the disintegration of $\mu$ given the past.

Recall that $\mathcal{A}$ denote the $a$-adic partition of $\left[0,1\right)$:
$\left\{ \left[\frac{k}{a},\frac{k+1}{a}\right)\right\} _{k=0}^{a-1}$
and correspondingly $\mathcal{A}_{n}=\bigvee_{i=0}^{n-1}T_{a}^{-i}\mathcal{A}$
is the $a$-adic partition of generation-$n$: $\left\{ \left[\frac{k}{a^{n}},\frac{k+1}{a^{n}}\right)\right\} _{k=0}^{a^{n}-1}$.
This simple partition is convenient to work with and it can easily
be shown that it is a generator for $T_{a}$. Naturally, $\mathcal{A}_{n}\left(x\right)$
stands for the $n$th-generation atom which contains $x$.

Let $f$ be a nonnegative piecewise linear function on $\left[0,1\right).$
We denote the set of its discontinuities. The \emph{minimal jumps
oscillation of $f$} is defined by, 
\[
\mjo\left(f\right)=\min_{x\in J}\left\{ \lim_{x'\in x^{+}}f\left(x'\right)-\lim_{x'\in x^{-}}f\left(x'\right)\right\} 
\]
 Recall the notation $r_{0}=T_{b}^{1}\left(1\right)=\left\{ b\right\} ,\ r_{1}=T_{b}^{2}\left(1\right)=\left\{ b\left\{ b\right\} \right\} ,\dots$.
Denote $m_{b}={\displaystyle \inf_{n}}\left\{ r_{n}:r_{n}>0\right\} $.
Since $b$ is specified, we have $0<m_{b}\leq1$ as shown in Proposition
\ref{prop:specification_prop}. Specifically, $\inf_{n}\mjo\left(\left\{ T_{b}^{n}\right\} \right)=m_{b}$.

Notice that for every $0<\theta<1-a^{-n'}$ the function $T_{b}^{n}\circ R_{\theta}\circ S_{a^{-n'}}$
is $T_{b}^{n}$ composed on the affine map $R_{\theta}\circ S_{a^{-n'}}\left(x\right)=a^{-n'}x+\theta$
of the real line. That is, this composition is $T_{b}^{n}$ stretched
horizontally by $a^{n'}$ and translated by $\theta$. Hence it is
well defined piecewise linear map with minimal jumps oscillation which
is greater or equal to $\mjo\left(T_{b}^{n}\right)$. In our notations
it means that $\mjo\left(T_{b}^{n}\circ R_{\theta}\circ S_{a^{-n'}}\right)\geq m_{b}$.
Notice that, $0<a^{-n'}\cdot b^{n}$<$a$ so $T_{b}^{n}\circ R_{\theta}\circ S_{a^{-n'}}$
also has a uniform slope bounded from above by $0<a$. These last
two properties imply that $T_{b}^{n}\circ R_{\theta}\circ S_{a^{-n'}}$
has at most $\left\lceil a/m_{b}\right\rceil $ discontinuities with
a minimal gap of $m_{b}/a$ between them. Thus, for a sufficiently
refined uniform partition of the unit interval, each member of the
partition contains at most one discontinuity.

Now we turn to prove Theorem \ref{thm:Raj}. Fix a $\widetilde{\mu}$-typical
$\omega\in\Omega^{-}$ and a $\mu_{\omega}$-typical $x\in\left[0,1\right)$.
Thus, we want to show asymptotic decay of the $\widetilde{\mu}$-expectation
of,
\begin{equation}
\limsup_{N}\left|\frac{1}{N}\sum_{n=1}^{N}e_{m}\left(T_{b}^{n}x\right)\right|\label{eq:first-1}
\end{equation}

It holds that $T_{b}^{n}\mathcal{A}_{n'+l}\left(x\right)$ has diameter
$O\left(a^{-l}\right)$ under the metric on $\left[0,1\right)\cong\rmz$
and by Theorem \ref{thm:2.2 of Hochman} we have,
\[
=\limsup_{N}\left|\frac{1}{N}\sum_{n=1}^{N}\intop e_{m}d\left(T_{b}^{n}\left(\mu_{\omega}\right)_{\mathcal{A}_{n'}\left(x\right)}\right)\right|
\]

Recall that $\mathcal{C}=\bigvee_{i=-\infty}^{0}{\displaystyle \widetilde{T}_{a}^{i}\mathcal{A}}$
denotes the $\sigma$-algebra in $\widetilde{\Omega}$ generated by
projection to the past. Since $\mathcal{C}\vee\mathcal{A}_{n}=\widetilde{T}_{a}^{n}\mathcal{C}$,
we have the equivariance relation $T_{a}^{n}\bigl(\bigl(\mu_{\omega}\bigr)_{\mathcal{A}_{n}\left(x\right)}\bigr)=\mu_{\widetilde{T}_{a}^{n}\left(\omega,x\right)}$
and also $\bigl(\mu_{\omega}\bigr)_{\mathcal{A}_{n}\left(x\right)}=R_{\theta_{\omega,x,n}}\bigl(S_{a^{-n}}\mu_{\widetilde{T}_{a}^{n}\left(\omega,x\right)}\bigr)$
for some phase $\theta_{\omega,x,n}$. This allows us to write,
\[
=\limsup_{N\rightarrow\infty}\left|\frac{1}{N}\sum_{n=1}^{N}\intop e_{m}\left(T_{b}^{n}R_{\theta_{\omega,x,n}}S_{a^{-n'}}\left(y\right)\right)d\mu_{\tilde{T}_{a}^{n'}\left(\omega,x\right)}\right|
\]

\begin{gather}
\leq{\displaystyle \limsup_{N\rightarrow\infty}}\frac{1}{N}\sum_{n=1}^{N}\left|\intop e_{m}\left(T_{b}^{n}R_{\theta_{\omega,x,n}}S_{a^{-n'}}\left(y\right)\right)d\mu_{\tilde{T}_{a}^{n'}\left(\omega,x\right)}\right|\label{eq:eval}
\end{gather}

If we split the integral above into the sum of integrals on the elements
of the uniform partition $\left\{ I_{j}^{k}\right\} _{j=1}^{k}$ such
that $\left|I_{j}^{k}\right|=1/k$, we get that for every interval
$I_{j_{0}}^{k}$ that doesn't contain a discontinuity we have,
\begin{gather*}
\intop_{I_{j_{0}}^{k}}e_{m}\left(T_{b}^{n}R_{\theta_{\omega,x,n}}S_{a^{-n'}}\left(y\right)\right)d\mu_{\tilde{T}_{a}^{n'}\left(\omega,x\right)}=\intop_{I_{j_{0}}^{k}}e_{m}\left(a^{z_{n}}y+\theta_{j_{0},\omega,x,n}\right)d\mu_{\tilde{T}_{a}^{n'}\left(\omega,x\right)}
\end{gather*}
where $\theta_{j_{0},\omega,x,n}$ is some phase that can be omitted
under absolute value. Otherwise $I_{j_{0}}^{k}$ contains a discontinuity
and its measure is less than $\sup_{j}\mu_{\omega}\left(I_{j}^{k}\right)$.
We take it into account in (\ref{eq:eval}), denoting $c_{\omega,k}=\left\lceil a/m_{b}\right\rceil \sup_{j}\mu_{\omega}\left(I_{j}^{k}\right)$
and write,
\[
\leq c_{\omega,k}+{\displaystyle \limsup_{N\rightarrow\infty}}\frac{1}{N}\sum_{n=1}^{N}\sum_{j=1}^{k}\left|\intop_{I_{j}^{k}}e_{m}\left(a^{z_{n}}y+\theta_{j,\omega,x,n}\right)d\mu_{\tilde{T}_{a}^{n'}\left(\omega,x\right)}\right|
\]

Now apply Theorem \ref{thm:Corollary-2.7}, after omitting the phases
because of the absolute value to obtain,
\begin{gather}
\leq c_{\omega,k}+\sum_{j=1}^{k}\intop\left|\intop_{I_{j}^{k}}e_{m}\left(a^{z}y\right)d\mu_{\eta}\right|d\nu_{\omega,x}\left(z,\eta\right)\label{eq:last-1-1}
\end{gather}

If we integrate both sides of the inequality (\ref{eq:first-1}) and
(\ref{eq:last-1-1}) w.r.t. $\widetilde{\mu}$, then by Corollary
\ref{thm:Corollary-2.7} it becomes,\\
\begin{dontbotheriftheequationisoverlong}
\intop\limsup_{N\rightarrow\infty}\left|\frac{1}{N}\sum_{n=1}^{m}e_{m}\left(T_{b}^{n}x\right)\right|d\widetilde{\mu}\left(\omega,x\right)\leq\intop\left(c_{\omega,k}+\sum_{j=1}^{k}\intop\left|\intop_{I_{j}^{k}}e_{m}\left(a^{z}y\right)d\mu_{\omega}\right|\right)dzd\widetilde{\mu}\left(\omega,x\right)
\end{dontbotheriftheequationisoverlong}%

Next, we apply the Cauchy-Schwartz inequality to get,
\begin{gather*}
\leq\intop\left(c_{\omega,k}+\sqrt{k\sum_{j=1}^{k}\intop\left|\intop_{I_{j}^{k}}e_{m}\left(a^{z}y\right)d\mu_{\eta}\right|^{2}dz}\right)d\widetilde{\mu}\left(\omega,x\right)
\end{gather*}
where Lemma \ref{lem:3.2} can provide the following evaluation: For
any $r>0$,

\[
\leq\intop\left(c_{\omega,k}+\sqrt{\frac{k\sum_{j=1}^{k}\mu_{\eta}\left(I_{j}^{k}\right)^{2}}{r\left|m\right|}+k\iint\chi_{B_{r}\left(y\right)}d\mu_{\omega}\left(y'\right)d\mu_{\omega}\left(y\right)}\right)d\widetilde{\mu}\left(\omega,x\right)
\]

Notice that the conditional measures $\mu_{\omega}$ are continuous
because $\mu$ was assumed to have positive entropy so by the dominated
convergence theorem,
\[
\intop c_{\omega,k}d\widetilde{\mu}\left(\omega,x\right)=\left\lceil \frac{a}{m_{b}}\right\rceil \intop\sup_{j}\mu_{\omega}\left(I_{j}^{k}\right)d\widetilde{\mu}\left(\omega,x\right)\xrightarrow{k\rightarrow\infty}0
\]
and similarly,
\[
\textbf{E}_{\mu_{\omega}}\left(\mu_{\omega}\left(B_{r}\left(y\right)\right)\right)=\iint\chi_{B_{r}\left(y\right)}\left(y'\right)d\mu_{\omega}\left(y'\right)d\mu_{\omega}\left(y\right)\xrightarrow{r\rightarrow0}0
\]

Finally, we can choose $r=\left|m\right|^{-1+\epsilon}$ and $k=\min\left\{ \left|m\right|^{0.5\epsilon},\textbf{E}_{\mu_{\omega}}\left(\mu_{\omega}\left(B_{r}\left(y\right)\right)\right)^{-1+\epsilon}\right\} $
for a small $\epsilon>0$ in a way that,

\[
\intop c_{\omega,k}+\sqrt{\frac{k\sum_{j=1}^{k}\mu_{\eta}\left(I_{j}^{k}\right)^{2}}{r\left|m\right|}+k\textbf{E}_{\mu_{\omega}}\left(\mu_{\omega}\left(B_{r}\left(y\right)\right)\right)}d\widetilde{\mu}\left(\omega,x\right)\xrightarrow{m\rightarrow\infty}0
\]

\hfill{}$\square$

\section{\label{part:Pf_quant_Raj}Proof of theorem \ref{thm:quant_Rajchman}}

The main difference between the previous proof and the quantitative
one here is that we assume explicit bounds on $\intop c_{\omega,k}d\widetilde{\mu}\left(\omega,x\right)$
and $\textbf{E}_{\mu_{\omega}}\left(\mu_{\omega}\left(B_{r}\left(y\right)\right)\right)$.
In addition, we aim for a pointwise decay instead of mean decay. This
may be achieved by relating the decay of a stochastic process to the
decay of its mean (as in Section \ref{subsec:lth power trick}).

We begin by repeating our assumptions. Let $a,b,\mu$ be as in Theorem
\ref{thm:Raj}. Denote the uniform partition of the interval $\rmz$
into $k$ pieces by $\left\{ I_{j}^{k}\right\} _{j=1}^{k}$ such that
$\left|I_{j}^{k}\right|=1/k$ and assume that for some $0<\alpha$
and for every $k\in\n$,
\begin{gather}
\underset{j,\omega}{\essup}\mu_{\omega}\left(I_{j}^{k}\right)\leq O\left(k^{-\alpha}\right)\label{eq:uni_con}
\end{gather}

When this holds it also imposes a secondary property which is important
for us,
\begin{align*}
\underset{\eta\in\Omega^{-}}{\essup}\iint\chi_{B_{k^{-1}}\left(y\right)}\left(x\right)d\mu_{\eta}\left(x\right)d\mu_{\eta}\left(y\right) & \leq O\left(k^{-\alpha}\right)
\end{align*}
but it is useful to have here a distinct parameter $\beta\geq\alpha$
such that,
\begin{equation}
\underset{\eta\in\Omega^{-}}{\essup}\iint\chi_{B_{k^{-1}}\left(y\right)}\left(x\right)d\mu_{\eta}\left(x\right)d\mu_{\eta}\left(y\right)\leq O\left(k^{-\beta}\right)\label{eq:sec_con}
\end{equation}

The first part of the proof is identical to the previous one but now
we think of the limit,
\[
\limsup_{N}\left|\frac{1}{N}\sum_{n=1}^{N}e_{m}\left(T_{b}^{n}x\right)\right|
\]
as a random variable from $\rmz$ to $\r$. Again, we fix a $\widetilde{\mu}$-typical
$\omega\in\Omega^{-}$ and a $\mu_{\omega}$-typical $x\in\left[0,1\right)$
and consider,
\[
\limsup_{N}\left|\frac{1}{N}\sum_{n=1}^{N}e_{m}\left(T_{b}^{n}x\right)\right|
\]
We repeat steps (\ref{eq:first-1}) to (\ref{eq:last-1-1}) from the
previous proof up to replace $c_{\omega,k}$ with $O\left(k^{-\alpha}\right)$
according to condition (\ref{eq:uni_con}). That is,\\
\begin{dontbotheriftheequationisoverlong}
\intop\limsup_{N}\left|\frac{1}{N}\sum_{n=1}^{N}e_{m}\left(T_{b}^{n}x\right)\right|d\widetilde{\mu}\left(\omega,x\right)\leq\intop O\left(k^{-\alpha}\right)+\sqrt{k\sum_{j=1}^{k}\intop\left|\intop_{I_{j}^{k}}e_{m}\left(a^{z}y\right)d\mu_{\eta}\right|^{2}dz}\ d\widetilde{\mu}\left(\omega,x\right)
\end{dontbotheriftheequationisoverlong}%

In this case Lemma \ref{lem:3.2} can provide the following evaluation:
For any $r>0$, using $\left|I_{j}^{k}\right|=1/k$ and $\mu_{\eta}\left(I_{j}^{k}\right)\leq O\left(k^{-\alpha}\right)$,

\[
\leq\intop O\left(k^{-\alpha}\right)+\sqrt{\frac{O\left(k^{1-\alpha}\right)}{r\left|m\right|}+k\int\chi_{B_{r}}\left(y\right)d\mu_{\omega}\left(y'\right)d\mu_{\omega}\left(y\right)}d\widetilde{\mu}\left(\omega,x\right)
\]

Finally, denote $r=k^{-\gamma}$ and choose $k=\bigl[\left|m\right|^{\delta}\bigr]$
for some $0<\delta$. Then with condition (\ref{eq:sec_con}),
\begin{gather*}
\leq\intop O\left(\left|m\right|^{-\delta\alpha}\right)+\sqrt{O\left(\left|m\right|^{-1-\delta\left(\alpha-1\right)+\gamma\delta}\right)+O\left(\left|m\right|^{\delta\left(1-\gamma\beta\right)}\right)}d\widetilde{\mu}\left(\omega,x\right)\\
\leq O\left(\left|m\right|^{\min_{\gamma,\delta>0}\max\left\{ -\delta\alpha,\frac{-1-\delta\left(\alpha-1\right)+\gamma\delta}{2},\frac{\delta\left(1-\gamma\beta\right)}{2}\right\} }\right)
\end{gather*}

Notice that $\frac{-1-\delta\left(\alpha-1\right)+\gamma\delta}{2}\geq-0.5$
which means that the referred expression in the exponent is bounded
from below by $-0.5$. An upper bound is achieved under the equality
$2\delta\alpha=1+\delta\left(\alpha-1\right)-\gamma\delta=\gamma\delta\beta-\delta$
that leads to $\gamma=\frac{2\alpha+1}{\beta}$ and $\delta=\frac{\beta}{\beta\left(1+\alpha\right)+2\alpha+1}$
. Therefore,
\[
\leq O\left(\left|m\right|^{-\left(\frac{\alpha\beta}{\beta\left(1+\alpha\right)+2\alpha+1}\right)}\right)
\]
and the conclusion is that the exponent is strictly bounded between
$-0.5$ and $0$. As already mentioned, this decay rate is too slow
for applying our second strategy from the introduction.%
\hfill{}$\square$

We remark that both conditions \ref{eq:uni_con} and \ref{eq:sec_con}
can be relaxed in several ways and mention here one of them as an
example. If conditions \ref{eq:uni_con} and \ref{eq:sec_con} hold
outside a sequence of measurable sets whose measure decays in some
known rate we still promise the mean decay of the Fourier transform
in a rate that we are able to bound.

Lastly, although we didn't achieve the desired result, we want to
make a suggestion of how to progress further in that direction. This
will be relevant if someone managed to improve the evaluation of the
decay rate from above,

\subsection{\label{subsec:lth power trick} Relating the decay rate of a stochastic
process to the decay rate of its mean }

Let $\left(X,\mathcal{B},\mu\right)$ be a probability space and $\bigl\{ A_{m}\bigr\}_{m=1}^{\infty}$
be a stochastic process. A conventional application of Markov's inequality
and the first Borel-Cantelli lemma implies that the decay rate in
$m$ of $\bigl\{\bigl|A_{m}\bigr|\bigr\}_{m=1}^{\infty}$ relates
to the decay rate in  $m$ of its means $\bigl\{\textbf{E}_{\mu}\bigl(\bigl|A_{m}\bigr|\bigr)\bigr\}_{m=1}^{\infty}$
in the following way,
\begin{prop}
If there exists a real $0<\alpha$ such that $\textbf{E}_{\mu}\bigl|A_{m}\bigr|<O\left(m^{-1-\alpha}\right)$
then for every $0<\epsilon<\alpha$ a.s. $\bigl|A_{m}\bigr|\leq o\left(m^{-\alpha+\epsilon}\right)$.
\end{prop}

\begin{proof}
Fix $c>1$. By Markov's inequality $\pr_{\mu}\left\{ \left|A_{m}\right|>\frac{1}{c}m^{-\alpha+\epsilon}\right\} <\frac{c\textbf{E}_{\mu}\bigl|A_{m}\bigr|}{m^{-\alpha+\epsilon}}$
and from the hypothesis above there exists some $s>0$ s.t. $\pr_{\mu}\left\{ \left|A_{m}\right|>\frac{1}{c}m^{-\alpha+\epsilon}\right\} <\frac{c\cdot s\cdot m^{-1-\alpha}}{m^{-\alpha+\epsilon}}$.
Thus,
\begin{gather*}
\pr_{\mu}\left\{ \left|A_{m}\right|>\frac{1}{c}m^{-\alpha+\epsilon}\right\} <\frac{c\cdot s\cdot m^{-1-\alpha}}{m^{-\alpha+\epsilon}}=c\cdot s\cdot m^{-1-\epsilon}\\ \implies\sum_{m}\pr_{\mu}\left\{ \left|A_{m}\right|>\frac{1}{c}m^{-\alpha+\epsilon}\right\} <\infty
\end{gather*}
Finally, by the first Borel-Cantelli lemma $\pr_{\mu}\bigl(\limsup_{m}\bigl\{\bigl|A_{m}\bigr|>\frac{1}{c}m^{-\alpha+\epsilon}\bigr\}\bigr)=0$
which implies that $\bigl\{\bigl|A_{m}\bigr|>\frac{1}{c}m^{-\alpha+\epsilon}\bigr\}$
occur finitely often with probability $1$.
\end{proof}

\section{\label{part:The-invariance-of}Proof of theorem \ref{thm:xb-inv.}}

We begin with providing an equivalent definition of weak-{*} convergence
that will be needed in that section, via the next continuous mapping
theorem (see Theorem 2.57 in \cite{key-18}),
\begin{thm}
\label{thm:Continuous mapping theorem} $\mu_{n}\xrightarrow{w-*}\mu$
iff $\intop fd\mu_{n}\xrightarrow{n\rightarrow\infty}\intop fd\mu$
for every bounded function $f:X\rightarrow\mathbb{\r}$ with $\mu\left(\left\{ x:f\text{ has a discontinuity at }x\right\} \right)=0$.
\end{thm}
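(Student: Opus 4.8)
The backward implication is immediate: every bounded continuous $f$ has empty discontinuity set, so if $\int f\,d\mu_{n}\to\int f\,d\mu$ holds for all bounded $\mu$-a.e.\ continuous $f$, it holds in particular for all bounded continuous $f$, which is precisely weak-{*} convergence. All the content lies in the forward implication, so assume $\mu_{n}\xrightarrow{w-*}\mu$ and fix a bounded Borel $f$ with $\mu\left(D\right)=0$, where $D=\left\{ x:f\text{ is discontinuous at }x\right\} $. By adding a constant and rescaling we may assume $0\le f\le1$; this alters neither $D$ nor the convergence statement, so there is no loss.

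The plan is to reduce convergence of integrals to convergence of measures of level sets via the layer-cake identity
\[
\intop f\,d\nu=\intop_{0}^{1}\nu\bigl(\left\{ f>t\right\} \bigr)\,dt,
\]
valid for every probability measure $\nu$. The key observation is that for all but countably many thresholds $t$ the level set $\left\{ f>t\right\} $ is a $\mu$-continuity set, i.e.\ $\mu\bigl(\partial\left\{ f>t\right\} \bigr)=0$. Indeed, if $f$ is continuous at a point $x$ and $f\left(x\right)\ne t$, then $f$ keeps the same strict inequality with $t$ throughout a neighborhood of $x$, so $x$ lies in the interior of $\left\{ f>t\right\} $ or of its complement; hence $\partial\left\{ f>t\right\} \subseteq D\cup\left\{ f=t\right\} $. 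Since $\mu\left(D\right)=0$ this gives $\mu\bigl(\partial\left\{ f>t\right\} \bigr)\le\mu\bigl(\left\{ f=t\right\} \bigr)$, and because the sets $\left\{ f=t\right\} $ are pairwise disjoint over $t$, at most countably many of them carry positive $\mu$-mass. Thus $\mu\bigl(\partial\left\{ f>t\right\} \bigr)=0$ for Lebesgue-a.e.\ $t\in\left(0,1\right)$.

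Next I would establish the continuity-set half of the Portmanteau theorem directly from the stated definition of weak-{*} convergence. For a closed set $F$, approximate $\chi_{F}$ from above by the Urysohn-type functions $g_{\epsilon}\left(x\right)=\max\left\{ 0,\,1-\epsilon^{-1}\operatorname{dist}\left(x,F\right)\right\} $, which are bounded and continuous (here metrizability of $X$, guaranteed since $X$ is Polish, is used). Then $\mu_{n}\left(F\right)\le\intop g_{\epsilon}\,d\mu_{n}\to\intop g_{\epsilon}\,d\mu$, and letting $\epsilon\to0$ with $g_{\epsilon}\downarrow\chi_{F}$ yields $\limsup_{n}\mu_{n}\left(F\right)\le\mu\left(F\right)$; the complementary inequality $\liminf_{n}\mu_{n}\left(G\right)\ge\mu\left(G\right)$ for open $G$ follows by passing to complements. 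Applying both to a continuity set $A$, for which $\mu\bigl(\overline{A}\bigr)=\mu\bigl(A^{\circ}\bigr)=\mu\left(A\right)$, forces $\mu_{n}\left(A\right)\to\mu\left(A\right)$.

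Combining the two steps, for Lebesgue-a.e.\ $t$ the set $\left\{ f>t\right\} $ is a continuity set, hence $\mu_{n}\bigl(\left\{ f>t\right\} \bigr)\to\mu\bigl(\left\{ f>t\right\} \bigr)$. The integrands $t\mapsto\mu_{n}\bigl(\left\{ f>t\right\} \bigr)$ are uniformly bounded by $1$ on $\left(0,1\right)$, so the dominated convergence theorem applied to the layer-cake identity gives $\intop f\,d\mu_{n}\to\intop f\,d\mu$, as desired. The main obstacle is not a single hard estimate but keeping the measure-theoretic bookkeeping clean: checking that $\left\{ f>t\right\} $ is Borel and that $t\mapsto\mu_{n}\bigl(\left\{ f>t\right\} \bigr)$ is measurable, and pinning down the inclusion $\partial\left\{ f>t\right\} \subseteq D\cup\left\{ f=t\right\} $ together with the countability-of-atoms argument that makes it effective for a.e.\ $t$. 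As an alternative route that bypasses Portmanteau entirely, one may invoke the Skorokhod representation theorem to realize $\mu_{n},\mu$ as laws of random variables $Y_{n}\to Y$ converging $\mu$-a.s.; since $\mu\left(D\right)=0$ one gets $f\left(Y_{n}\right)\to f\left(Y\right)$ a.s., and bounded convergence finishes the proof.
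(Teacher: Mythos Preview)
Your proof is correct. The paper, however, does not supply a proof of this theorem at all: it simply quotes it as Theorem~2.57 from Billingsley's \textit{Probability and Measure} and uses it as a black box in the argument for Theorem~\ref{thm:xb-inv.}. Your layer-cake plus Portmanteau route is one of the standard self-contained proofs, and the Skorokhod alternative you sketch is equally valid though it imports heavier machinery. The only point worth flagging is that the statement as written says ``bounded function'' without explicitly demanding Borel measurability; you tacitly assume $f$ is Borel, which is the appropriate reading for the integrals to be well-defined and is all that is needed for the paper's application, where $f\circ T_{b}$ is piecewise continuous with at most one jump at $0$.
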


Let $x$ be a $\mu$-typical point. In this part we assume that $x$
equidistributes along a subsequence $\left\{ N_{k}\right\} $ under
$T_{b}$ for some measure where all the hypotheses are as in Problem
\ref{prob:(Generalized-HET)} and also that $\left\{ 0\right\} $
is not an atom of this measure. Recall that in our notations it means
that the limit $\lambda_{x,\left\{ N_{k}\right\} }=\lim_{k\rightarrow\infty}\frac{1}{N_{k}}\sum_{i=0}^{N_{k}-1}\delta_{T_{b}^{i}\left(x\right)}$
is well defined. Then we show that $\lambda_{x,\left\{ N_{k}\right\} }$
must be $T_{b}$-invariant. For convenience, our proof is given as
if the equidistribution is along the whole sequence since for equidistribution
along a subsequence the argument is identical. Here we denote the
limit measure by $\lambda_{\infty}$ for short.

Fix $1<b\in\r$ and let $x\in\rmz$ be a $\mu$-typical point and
denote,
\begin{gather*}
\lambda_{N}=\frac{1}{N}\sum_{i=0}^{N-1}\delta_{T_{b}^{i}x}\xrightarrow{w-*}\lim_{N\rightarrow\infty}\frac{1}{N}\sum_{i=0}^{N-1}\delta_{T_{b}^{i}x}=\lambda_{\infty}
\end{gather*}

We need to show that 
\[
\intop fd\lambda_{\infty}=\intop fdT_{b*}\lambda_{\infty}
\]
for all $f\in C\left(\rmz\right)$.

We have,
\[
\lambda_{N}-T_{b*}\lambda_{N}=\frac{1}{N}\sum_{i=0}^{N-1}\delta_{T_{b}^{i}\left(x\right)}-\frac{1}{N}\sum_{i=0}^{N-1}\delta_{T_{b}^{i+1}\left(x\right)}=\frac{\delta_{x}-\delta_{T_{b}^{N}\left(x\right)}}{N}\xrightarrow{N\rightarrow\infty}0
\]
 so $\lim_{N\rightarrow\infty}\left(\lambda_{N}-T_{b*}\lambda_{N}\right)=0$.

Additionally, for every $f\in C\left(\rmz\right)$, $f\circ T_{b}$
has at most one discontinuity which is located at $0$ and by assumption
$\lambda_{\infty}\left(\left\{ 0\right\} \right)=0$. Since $\lambda_{N}\xrightarrow{w-*}\lambda_{\infty}$
for any $f\in C\left(\rmz\right)$ then by Theorem \ref{thm:Continuous mapping theorem},
\begin{gather*}
\textbf{E}_{T_{b*}\lambda_{N}}\left(f\right)=\textbf{E}_{\lambda_{N}}\left(f\circ T_{b}\right)\xrightarrow{N\rightarrow\infty}\textbf{E}_{\lambda_{\infty}}\left(f\circ T_{b}\right)=\textbf{E}_{T_{b*}\lambda_{\infty}}\left(f\right)
\end{gather*}
Now we can stitch it all together and get for every $f\in C\left(\rmz\right)$
that,\\
\begin{align*}
\left|\textbf{E}_{\lambda_{\infty}}\left(f\right)-\textbf{E}_{T_{b*}\lambda_{\infty}}\left(f\right)\right|\leq & \left|\textbf{E}_{\lambda_{\infty}}\left(f\right)-\textbf{E}_{\lambda_{N}}\left(f\right)\right|+\\
+ & \left|\textbf{E}_{\lambda_{N}}\left(f\right)-\textbf{E}_{T_{b*}\lambda_{N}}\left(f\right)\right|+\\
+ & \left|\textbf{E}_{T_{b*}\lambda_{N}}\left(f\right)-\textbf{E}_{T_{b*}\lambda_{\infty}}\left(f\right)\right|\rightarrow0
\end{align*}
as $N$ tends to infinity which implies the desired result\hfill{}$\square$

\section{\label{part:range of validity}Exploring conditions (\ref{eq:uni_con})
and (\ref{eq:sec_con})}

In this section we inquire the validity range of Conditions (\ref{eq:uni_con})
and (\ref{eq:sec_con}). We added some brief theoretical background
that will be used for that subjective.

\subsection{\label{subsec:Stationary-coding}Stationary coding}

The content of this subsection is taken from the book of P. Shields
on ergodic theory \cite{key-14}

Let $A$ and $B$ be finite sets. A Borel measurable map $F:A^{\z}\rightarrow B^{\z}$
is a \emph{stationary coder} if $F\left(T_{A}x\right)=T_{B}F\left(x\right)$
for every $x\in A^{\z}$ where $T_{A}$ and $T_{B}$ denotes the shifts
on the respective two-sided sequence spaces $A^{\z}$ and $B^{\z}$.
Stationary coding carries a Borel measure $\mu$ on $A^{\z}$ into
the measure $\nu=\mu\circ F^{-1}$ defined for Borel subsets $C$
of $B^{\z}$ by $\nu\left(C\right)=\mu\left(F^{-1}\left(C\right)\right)$.
The encoded process $\nu$ is said to be a \emph{stationary coding}
of $\mu$, with coder $F$. It is immediate to conclude that a stationary
coding of a stationary process is itself a stationary process. 

The map $f:A^{\z}\rightarrow B$ defined by the formula $f\left(x\right)=F\left(x\right)_{0}$
is the \emph{time-zero coder} associated with $F$. Notice that stationary
coding preserves ergodicity.

Let $T$ be an invertible, ergodic transformation of the probability
space $\left(X,\Sigma,\mu\right)$. Let $S$ be a set of positive
measure. Denote by $\left\{ R_{n}\right\} $ the $\left(T,\mathcal{A}_{S}\right)$-process
defined by the partition $\mathcal{A}_{S}=\left\{ S,X\setminus S\right\} $
with $X\setminus S$ labeled by $0$ and $S$ labeled by $1$. That
is, $R_{n}\left(x\right)=\chi_{S}\left(T^{n}x\right)$ for every integer
$n\in\z$. Notice that $\left\{ R_{n}\right\} $ is a stationary coding
of the $\left(T,\mathcal{A}_{S}\right)$-process with time zero coder
$\chi_{S}$ and therefore it is ergodic.

Notice that such a non trivial $\left\{ R_{n}\right\} $ process (i.e
$0<\mu\left(S\right)<1$) has positive entropy where $\left(X,\Sigma,\mu\right)$
is a product measure on $\mathcal{A}_{S}^{\mathbb{Z}}$. This is because
that a non trivial $\left(T,\left\{ S,X\setminus S\right\} \right)$-process
is a factor of Bernoulli measure which has completely positive entropy.

\subsection{Reverse Markov's inequality}

The next reverse version of Markov's inequality is famous and easy
to prove, we present it here with a proof for completeness.
\begin{thm}
\label{thm:(Reverse-Markov-inequality)}(Reverse Markov's inequality)
Let $X$ be a random variable on a probability space $\left(\Omega,\pr\right)$
that satisfies $\pr\left(X\leq a\right)=1$ for some constant $a$.
Then, for $d<\ev X$
\[
\pr\left(X>d\right)\geq\frac{\ev X-d}{a-d}
\]
\end{thm}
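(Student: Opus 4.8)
The plan is to obtain the bound from the ordinary Markov inequality applied to the nonnegative random variable $a-X$. The hypothesis $\pr(X\le a)=1$ is precisely the statement that $a-X\ge 0$ almost surely, and integrating this gives $\ev{X}\le a$; together with the assumption $d<\ev{X}$ this yields $a-d>0$. These are the only facts the argument uses.

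First I would apply Markov's inequality to $a-X$ at the positive level $a-d$, getting $\pr(a-X\ge a-d)\le \ev{a-X}/(a-d)=(a-\ev{X})/(a-d)$. Then I would rewrite the event $\{a-X\ge a-d\}$ as $\{X\le d\}$, so that $\pr(X\le d)\le (a-\ev{X})/(a-d)$, and pass to complements: $\pr(X>d)=1-\pr(X\le d)\ge 1-(a-\ev{X})/(a-d)=(\ev{X}-d)/(a-d)$, which is exactly the asserted inequality.

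An equivalent and slightly more self-contained route avoids naming Markov at all: one checks the pointwise inequality $X\le d+(a-d)\chi_{\{X>d\}}$, which holds almost surely because the right-hand side equals $d\ge X$ on $\{X\le d\}$ and equals $a\ge X$ on $\{X>d\}$ (using $\pr(X\le a)=1$); taking expectations gives $\ev{X}\le d+(a-d)\pr(X>d)$, and dividing by $a-d>0$ finishes the proof. In either form there is essentially no obstacle; the one point deserving attention is confirming $a-d>0$ before dividing, so that the inequality direction is preserved, and this is precisely where the hypotheses $d<\ev{X}$ and $\ev{X}\le a$ enter.
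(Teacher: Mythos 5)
Your first argument is exactly the paper's proof: apply Markov's inequality to the nonnegative variable $a-X$ at level $a-d$, identify $\left\{ a-X\geq a-d\right\} $ with $\left\{ X\leq d\right\} $, and pass to complements. The proposal is correct and takes essentially the same approach (the pointwise-inequality variant you add is a fine, equivalent alternative).
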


\begin{proof}
Define the random variable $V=a-X$ which is a.s. nonnegative by assumption.
The event $\left\{ X\leq d\right\} $ is equivalent to the event $\left\{ V\geq a-d\right\} $.
Now with Markov's inequality,
\[
\pr\left(\left\{ X\leq d\right\} \right)=\pr\left(\left\{ V\geq a-d\right\} \right)\leq\frac{\ev V}{a-d}=\frac{a-\ev X}{a-d}
\]
where the RHS numerator and denominator are strictly positive since
$d<\ev X\leq a$. Finally,
\[
\pr\left(\left\{ X>d\right\} \right)=1-\pr\left(\left\{ X\leq d\right\} \right)>\frac{\ev X-d}{a-d}
\]
\end{proof}

\subsection{\label{subsec:A-counterexample}Not all the $\mu$'s meet condition
(\ref{eq:sec_con})}

Here we construct a counterexample which violates the weaker condition
(\ref{eq:sec_con}). We anticipate that condition (\ref{eq:uni_con})
can be violated by simpler examples. 

We use the formalism from Sections \ref{sec:Specifics-in-ergodic and entropy}
and \ref{subsec:Stationary-coding}. Let $\left(X_{l},\Sigma,\nu,\sigma\right)$
be a uniformly distributed full Bernoulli shift in $2<l$ symbols.
We define a measurable set $\mathcal{Y}\in\Sigma$ recursively.

Fix $0<\epsilon<0.5$. For $k=1$ fix $2\leq n_{1}\in\n$ such that
$\log\left(n_{1}\right)^{-1}<1-\epsilon$ and let $Y_{1}$ be a set
such that $0.5\log\bigl(n_{1}\bigr)^{-2}<\nu\bigl(Y_{1}\bigr)<\log\bigl(n_{1}\bigr)^{-2}$.
Define $\mathcal{Y}_{1}=\bigcup_{m=0}^{\left[\log n_{1}\right]}\sigma^{-m}Y_{1}$
so that $0<\nu\bigl(\mathcal{Y}_{1}\bigr)\leq\log\bigl(n_{1}\bigr)^{-1}$
and also $\nu\bigl(\mathcal{Y}_{1}\bigr)<1-\epsilon$.

For $1<k\in\n$ choose $n_{k-1}<n_{k}\in\n$ such that $\log\bigl(n_{k}\bigr)^{-1}<1-\epsilon-\sum_{i=1}^{k-1}\nu\bigl(\mathcal{Y}_{i}\bigr)$
 and let $Y_{k}$ be a set such that $0.5\cdot\log\bigl(n_{k}\bigr)^{-2}<\nu\bigl(Y_{k}\bigr)<\log\bigl(n_{k}\bigr)^{-2}$.
Define $\mathcal{Y}_{k}=\bigcup_{m=0}^{\left[\log n_{k}\right]}\sigma^{-m}Y_{k}$
so that $0<\nu\bigl(\mathcal{Y}_{k}\bigr)\leq\frac{1}{\log n_{k}}\leq1-\epsilon-\sum_{i=1}^{k-1}\nu\bigl(\mathcal{Y}_{i}\bigr)$
and therefore $0<\sum_{i=1}^{k}\nu\bigl(\mathcal{Y}_{i}\bigr)<1-\epsilon$.

Finally, define $\mathcal{Y}=\bigcup_{i=1}^{\infty}\mathcal{Y}_{i}$
and define a process on $\left\{ 0,1\right\} ^{\mathbb{Z}}$ by $R_{n}\left(x\right)=\chi_{\mathcal{Y}}\left(\sigma^{n-1}x\right)$
with the induced measure $\widetilde{\mu}=\nu\circ R^{-1}$ as in
Section \ref{subsec:Stationary-coding}. By our construction $0<\nu\left(\mathcal{Y}\right)<1-\epsilon$
and $\left\{ n_{k}\right\} $ is strictly increasing. Since $\left(X_{l},\Sigma,\nu\right)$
is ergodic so is $\left\{ R_{n}\right\} $ as a stationary coding
with time zero coder (which trivially also preserves the measure).
That is, we can interpret $\widetilde{\mu}$ as a probability measure
which is ergodic and shift invariant on the binary full shift. It
also has positive entropy by \ref{subsec:Stationary-coding}.

In another direction, notice that $\left[0,1\right)$ is the image
of the infinite binary sequences $\left\{ 0,1\right\} ^{\n}$ under
the natural map $\left(x_{i}\right)_{i=1}^{\infty}\mapsto\sum_{i=1}^{\infty}\frac{x_{i}}{2^{i}}$
which is also a bijection on the complement of a countable set. That
is, a bijection on the complement of a null set for every continuous
measure. Thus, if we denote $X_{2}=\left\{ 0,1\right\} ^{\z}$ (the
2-shift) then we can define up to a null set the bijection $\iota:X_{2}\rightarrow\left\{ 0,1\right\} ^{-\mathbb{N}_{0}}\times\left[0,1\right)$.
We can also define the pushforward $\widetilde{\rho}=\widetilde{\mu}\circ\iota^{-1}$
(that is, a measure on $\left\{ 0,1\right\} ^{-\mathbb{N}_{0}}\times\left[0,1\right)$)
which preserves all the relevant properties of $\widetilde{\mu}$
(ergodicity is trivial and positive entropy by being a non trivial
factor of system with completely positive entropy). Now, $\widetilde{\rho}=\intop\rho_{\eta}d\widetilde{\rho}\left(\eta\right)$
where it is disintegrated with respect to the $\sigma$-algebra that
generated by projection to the past (like in Section \ref{subsec:Results})
and the conditional measures $\rho_{\eta}$ identified as measures
on $\left[0,1\right)$. Similarly, $\widetilde{\mu}=\intop d\mu{}_{\eta}d\widetilde{\mu}\left(\eta\right)$
where $\eta\in\left\{ 0,1\right\} ^{-\n_{0}}$.

For every $k\in\n$ denote the event $\bigcap_{i=0}^{\left[\log n_{k}\right]}\left\{ R_{i}=1\right\} $
by $E_{k}$. We begin with,
\[
\intop\iint\chi_{B_{n_{k}^{-1}}\left(y\right)}\left(x\right)d\rho_{\eta}\left(x\right)d\rho_{\eta}\left(y\right)d\widetilde{\rho}\left(\eta\right)
\]
 where $B_{n_{k}^{-1}}\left(y\right)=\left\{ x\in\left[0,1\right):\left|x-y\right|<n_{k}^{-1}\right\} $.
We need to show that this integral decays in $n_{k}$ with sub polynomial
rate. We can pullback this integration to the space $X_{2}$ and restrict
it to the event $E_{k}\times E_{k}$ since every pair $\left(x',y'\right)\in E_{k}\times E_{k}$
corresponds to a pair $\left(x,y\right)\in\left[0,1\right)^{2}$ with
$\left|x-y\right|<\frac{1}{n_{k}}$. Thus,
\begin{align*}
 & \geq\intop\iintop\chi_{E_{k}\times E_{k}}\left(x,y\right)d\mu_{\eta}\times\mu_{\eta}\left(x,y\right)d\widetilde{\mu}\left(\eta\right)\\
 & =\intop\intop\chi_{E_{k}}\left(x\right)d\mu{}_{\eta}\left(x\right)\intop\chi_{E_{k}}\left(y\right)d\mu_{\eta}\left(y\right)d\widetilde{\mu}\left(\eta\right)\\
 & =\intop\left(\intop\chi_{E_{k}}\left(x\right)d\mu{}_{\eta}\left(x\right)\right)^{2}d\widetilde{\mu}\left(\eta\right)\\
 & \geq\left(\intop\intop\chi_{E_{k}}\left(x\right)d\mu{}_{\eta}\left(x\right)d\widetilde{\mu}\left(\eta\right)\right)^{2}\\
 & =\left(\intop\chi_{E_{k}}\left(x\right)d\widetilde{\mu}\left(x\right)\right)^{2}
\end{align*}
where in the first equality we used Fubini and then split the integral
into two separate integrals, and the second inequality is Jensen.
We can pullback the integration once again to the full shift $X_{l}$
and restrict it to the event $Y_{k}$ which included in the preimage
of $E_{k}$,

\begin{align*}
 & \geq\left(\intop\chi_{Y_{k}}\left(x\right)d\nu\left(x\right)\right)^{2}\\
 & =0.25\cdot\log\bigl(n_{k}\bigr)^{-4}
\end{align*}
Now, for every $k\in\n$ we can use the reverse Markov inequality
(Theorem \ref{thm:(Reverse-Markov-inequality)}) with $a=2$ and $d=\frac{1}{8\log\left(n_{k}\right)^{4}}$
to get,
\[
\pr_{\widetilde{\mu}}\left(\bigl\{\intop\mu{}_{\eta}\left(B_{n_{k}^{-1}}\left(y\right)\right)d\mu{}_{\eta}\left(y\right)\geq\frac{1}{8\log\left(n_{k}\right)^{4}}\bigr\}\right)\geq\frac{1}{16\log\left(n_{k}\right)^{4}}
\]

This violates condition (\ref{eq:sec_con}).

\subsection{\label{subsec:Finite-memory-length}Processes with memory of finite
length meet condition (\ref{eq:uni_con})}

In this special case it will be enough to use the total probability
formula to reach condition (\ref{eq:uni_con}) which also implies
that condition (\ref{eq:sec_con}) holds.

Let $A$ be a finite alphabet $\left|A\right|=l$ for some integer
$l\geq2$ and let $\left(A^{\z},\mu,T\right)$ be a finite memory
length symbolic process which is ergodic, invariant and with positive
entropy w.r.t. $T$. That is, the prediction of $A_{0}$ depends only
on a finite portion of the past $A_{-n},\dots,A_{-1}$ for some $n<\infty$
or explicitly for every $m\in\z$ and $x\in A^{\z}$, $\mu\left(\left[x_{m+n}^{\infty}\right]|\left[x_{-\infty}^{m}\right]\right)=\mu\left(\left[x_{m+n}^{\infty}\right]\right)$.
Denote $s=\max_{a\in A}\mu\left(\left[a\right]\right)<1$. Assume
that $n<m$ then with direct computation,
\begin{align*}
\mu_{\left[x_{-\infty}^{0}\right]}\left(\left[x_{0}^{m}\right]\right) & \leq\mu\left(\left[x_{n}^{m}\right]\right)\\
 & =\mu\left(\left[x_{n}\right]\right)\mu\left(\left[x_{n+1}^{m}\right]|\left[x_{n}\right]\right)\\
 & \leq\mu\left(\left[x_{n}\right]\right)\mu\left(\left[x_{2n}^{m}\right]\right)\\
 & \vdots\\
 & =\prod_{i=1}^{\fr{m/n}}\mu\left(\left[x_{n\cdot i}\right]\right)\\
 & \leq s^{m/n+1}
\end{align*}

If we consider a uniform partition of the unit interval $\left\{ I_{j}^{k}\right\} _{j=1}^{k}$
such that $\left|I_{j}^{k}\right|=\frac{1}{k}$ and where $k=l^{m}$
then the display above implies that $\essup_{j,\omega}\mu_{\omega}\left(I_{j}^{k}\right)\leq O\left(k^{\log\left(s\right)/\left(n\log\left(l\right)\right)}\right)$.

\section{\label{part:Rajchman+invariant is insufficient}Rajchman and invariance
does not force uniqueness}

\subsection{Self-similarity and classes of algebraic numbers}

Here we set out some of the basic definitions regarding self similarity.
A set $\mathcal{F}=\left\{ f_{1},\dots f_{k}\right\} $ for $k\geq2$
of contractions on $\r$, $f_{i}\left(x\right)=r_{i}x+a_{i}$, with
$0<r_{i}<1$ for each $i\in\left\{ 1,\dots,k\right\} $, is an\textbf{
}\emph{iterated function system} or \emph{IFS} for short. We also
call the $f_{i}$'s \emph{similarities}.

A key fact in this topic is that there exists a unique non-empty compact
set $K\subset\r$ such that $K=\bigcup_{i=1}^{k}f_{i}\left(K\right)$.
We call it the \emph{attractor}\textbf{ }or the\textbf{ }\emph{self-similar
set} of the IFS $\mathcal{F}$.

Given a list of positive numbers $\boldsymbol{p}=\left(p_{1},\dots,p_{k}\right)$
with $\sum_{i=1}^{k}p_{i}=1$ we call it a \emph{positive probability
vector} and there is a unique probability measure $\mu_{\boldsymbol{p}}$
with $\mu_{\boldsymbol{p}}=\sum_{i=1}^{k}p_{i}\cdot f_{i}\mu_{\boldsymbol{p}}$.
This measure is supported on the attractor of $\mathcal{F}$ and called
a \emph{self-similar measure}.

A \emph{Pisot number} is a real algebraic integer greater than $1$
all of whose Galois conjugates are less than $1$ in absolute value.
A \emph{Salem number} is a real algebraic integer greater than $1$
whose all conjugate roots have absolute value no greater than $1$,
and at least one of them has an absolute value which equals $1$. There
are countably many Pisot and Salem numbers.

We conclude this part with a recent theorem of Varj\'u and Yu \cite{key-16}
which plays a major role in our proof that $T_{b}$-invariant Rajchman
measure need not be Parry,
\begin{thm}
\label{thm:1.6} Let $k\geq2$ be an integer. Let $r_{1}=r^{l_{1}},\dots,r_{k}=r^{l_{k}}$
for some $r\in\left(0,1\right)$ and $l_{1},\dots,l_{k}\in\z_{>0}$
with gcd$\left(l_{1},\dots,l_{k}\right)=1$. Assume that $r^{-1}$
is not a Pisot or Salem number. Let $\mu$ be a non-singleton self-similar
measure associated to the IFS $\mathcal{F}=\left\{ f_{1},\dots,f_{k}\right\} $
($f_{i}\left(x\right)=r_{i}x+a_{i}$) and a positive probability measure.
Then,
\[
\left|\hat{\mu}\left(z\right)\right|=O\left(\left|\log\left(z\right)^{-c}\right|\right)
\]
for some $c>0.$
\end{thm}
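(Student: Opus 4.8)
Since this is a deep result of Varj\'u and Yu I only outline the route I would follow were I to reconstruct it. The phenomenon that has to be defeated is the classical failure of Fourier decay for Bernoulli convolutions with Pisot ratio: for $\lambda=(\sqrt5-1)/2$ one has $\hat\mu(\xi)=\prod_{n\ge0}\cos(\pi\lambda^{n}\xi)$, which does not vanish at infinity precisely because $\lambda^{n}\xi$ stays close to $\z$ along a full-density set of $n$. So the core of the theorem is a quantitative, effective form of the Erd\H{o}s--Kahane argument showing that when $\lambda:=r^{-1}$ is \emph{not} Pisot or Salem, the rescaled frequencies $\lambda^{n}\xi$ cannot cluster near $\z$ for too long.

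First I would set up the random-walk structure. Write $\mu$ as the law of $\sum_{n\ge1}r^{\,l_{i_{1}}+\dots+l_{i_{n-1}}}a_{i_{n}}$ with $(i_{n})$ i.i.d.\ $\sim\boldsymbol p$; conditioning on the first $n$ digits gives the exact recursion
\[
\hat\mu(\xi)=\sum_{|\mathbf{i}|=n}p_{i_{1}}\cdots p_{i_{n}}\,e\bigl(\phi_{\mathbf{i}}(\xi)\bigr)\,\hat\mu\bigl(r^{\,l_{i_{1}}+\dots+l_{i_{n}}}\xi\bigr),
\]
with explicit phases $\phi_{\mathbf{i}}$. Grouping consecutive digits into blocks and passing to absolute values reduces matters to showing that a definite proportion of the blocks contributes a modulus factor $\le1-\varepsilon$, uniformly in the frequency entering the block. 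The gain per block comes from non-concentration: since $\mu$ is a non-singleton self-similar measure it has positive Hausdorff dimension, hence a uniform bound $\mu(I)\le C|I|^{s}$ with $s>0$ on small intervals, and averaging a rescaled piece of $\mu$ over a dyadic range of scales (exactly as in Lemma \ref{lem:3.2}) then forces $|\widehat{\mu|_{\text{block}}}(t)|\le1-\varepsilon$ unless the normalized frequency $t$ is abnormally close to $0$ modulo the relevant lattice, i.e.\ unless $t$ is within $\varepsilon$ of an integer.

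The main obstacle is the remaining step: bounding the number of scales $n\le N$ (with $N\asymp\log|\xi|$) at which $\lambda^{n}\xi$ is within $\varepsilon$ of $\z$. If this held for all but $o(N)$ of them, set $c_{n}$ to be the nearest integer to $\lambda^{n}\xi$; from $\lambda^{n+1}\xi=\lambda\cdot\lambda^{n}\xi$ the integers $c_{n}$ satisfy an approximate linear recurrence, and a pigeonhole argument over a bounded window (using that successive ratios $c_{n+1}/c_{n}$ are comparable to $\lambda$) forces $\lambda$ to lie within an explicit distance of a real algebraic integer $>1$ all of whose conjugates have modulus $\le1$ --- that is, of a Pisot or Salem number. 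Because $\lambda$ is a \emph{fixed} non-Pisot, non-Salem number, such an approximation can persist only over a window of bounded length, and iterating gives the required density bound on the "bad'' scales. Feeding this density of "good'' scales back into the block product of the previous step yields decay of $\hat\mu(\xi)$.

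The reason the final rate is only $(\log|\xi|)^{-c}$ rather than polynomial in $|\xi|$ is that the Erd\H{o}s--Kahane dichotomy can be made effective and \emph{uniform} over the whole non-Pisot/non-Salem set only with a substantial loss: for transcendental $\lambda$ there is no uniform transcendence measure, and for algebraic $\lambda$ one needs effective lower bounds for $|\lambda-\theta|$ over Pisot/Salem $\theta$ of every degree, which are themselves weak. In both regimes one only controls the obstruction over frequency-windows of length comparable to $\log|\xi|$, so the honest uniform conclusion is $|\hat\mu(z)|=O((\log|z|)^{-c})$ for some $c>0$. The hard part, then, is entirely Step 3 --- making the approximate-recurrence/near-Pisot argument quantitative and uniform, which is really a statement about how well a number that is not Pisot or Salem can be approximated by Pisot and Salem numbers.
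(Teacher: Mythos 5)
This statement is not proved in the paper at all: it is imported verbatim from Varj\'u and Yu \cite{key-16} and used as a black box to build the counterexample of Section \ref{part:Rajchman+invariant is insufficient}, so there is no in-paper argument to measure your sketch against. Judged on its own terms, your outline captures the right general shape (an Erd\H{o}s--Kahane-type dichotomy: either a positive proportion of the scales $\lambda^{n}\xi$, $\lambda=r^{-1}$, stay away from $\z$ and each such block contributes a uniform modulus loss via a Frostman-type non-concentration bound for the non-singleton self-similar measure, or the nearest-integer sequence satisfies an approximate linear recurrence), but the step you yourself label ``the hard part'' is not a gap you are allowed to leave: it \emph{is} the theorem. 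Worse, the way you state it is unjustified and essentially false as written. The pigeonhole applied to the approximate recurrence does not produce a genuine Pisot or Salem number at bounded distance from $\lambda$; it produces algebraic numbers whose degree and height grow with the window length $N\asymp\log|\xi|$ and whose conjugates are only \emph{approximately} inside the closed unit disc. The set of Pisot numbers is closed, so a fixed non-Pisot $\lambda$ does have positive distance to it, but no such statement is available for Salem numbers (their closure is not known to omit any given non-Salem point), let alone for the ``near-Salem'' approximants of unbounded degree that the argument actually yields, and these are dense in $(1,\infty)$. Hence the assertion ``because $\lambda$ is a fixed non-Pisot, non-Salem number, such an approximation can persist only over a window of bounded length'' does not follow from anything you have set up; making that rigidity quantitative and uniform in degree and height is precisely the content of \cite{key-16}.

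Two smaller points: the inference ``non-singleton self-similar, hence positive Hausdorff dimension, hence $\mu(I)\leq C\left|I\right|^{s}$'' has the logic backwards --- the uniform power bound on intervals is a standard lemma for non-atomic self-similar measures and should be invoked as such, not deduced from positive dimension; and your closing explanation of why only a $\left(\log\left|z\right|\right)^{-c}$ rate is attainable is a reasonable heuristic but again presupposes the very approximation estimates that are missing. As a proof attempt this therefore has a genuine gap; as a summary of the strategy behind \cite{key-16} it is a serviceable first approximation, and if you want to use the result in this paper the correct move is simply to cite it, as the paper does.
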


We use Theorem \ref{thm:1.6} to prove that Rajchman $T_{b}$-invariant
measure must not be unique. Let $b\geq2$ be a specified non simple
number which is not Salem or Pisot. We can guarantee the existence
of such a number by cardinality considerations (recall that Pisot numbers
as well as Salem numbers and simple numbers are countable while specified
numbers have the cardinality of continuum).

Denote $\n_{0}=\n\cup\left\{ 0\right\} $ for short and denote the
$b$-expansion of $b$ by $\left(a_{i}\right)$. That is $b=a_{0}+\frac{a_{1}}{b}+\dots$
and in particular $a_{0}=\fr b$. Every sequence $\left(x_{i}\right)\in\left\{ 0,1\right\} ^{\n_{0}}$
and $n\in\n$ satisfies $\left(x_{i+n}\right)<\left(a_{i}\right)$
since $x_{n}\in\left\{ 0,1\right\} $ which is either way less than
$\fr b=a_{0}$. Recall that $X_{b}\subset\Lambda_{b}^{\n_{0}}$ is
the subset of sequences that encodes $b$-expansions, so by Parry's
criterion (Theorem \ref{thm:Parry's criterion}) we get that $\left\{ 0,1\right\} ^{\n_{0}}\subset X_{b}$. 

Define the set $K=\left\{ \sum_{i=0}^{\infty}\frac{b_{i}}{b^{i}}:\left(b_{i}\right)\in\left\{ 0,1\right\} ^{\n_{0}}\right\} $
which is the image of $\left\{ 0,1\right\} ^{\n}$ under the $b$-expansion
and define the IFS $\mathcal{F}=\left\{ f_{0}\left(x\right)=\frac{x}{b},f_{1}\left(x\right)=\frac{x}{b}+\frac{1}{b}\right\} $.
If we denote concatenation of symbols by $\centerdot$ then clearly
$\left\{ 0,1\right\} ^{\n_{0}}=0\centerdot\left\{ 0,1\right\} ^{\n_{0}}\cup1\centerdot\left\{ 0,1\right\} ^{\n_{0}}$
which means under the $b$-expansion that $K=\cup_{i=0}^{1}f_{i}\left(K\right)$.
Thus, $K$ is the attractor of $\mathcal{F}$ and for every positive
probability vector $\boldsymbol{p}=\left(p_{0},p_{1}\right)$ the
self similar measure that satisfies $\mu_{\boldsymbol{p}}=\sum_{i=0}^{1}p_{i}\cdot f_{i}\mu_{\boldsymbol{p}}$
is well defined. Pulling it back again under the $b$-expansion, it
translated into Bernoulli shift $\boldsymbol{p}^{\n_{0}}$ on $X_{b}|_{\left\{ 0,1\right\} ^{\n_{0}}}=\left\{ 0,1\right\} ^{\n_{0}}$
where the former is $T_{b}$-invariant and correspondingly the later
is shift invariant.

If we consider Theorem \ref{thm:1.6} where both of the $l_{i}$'s
are equal to $1$ such that $gcd\left(l_{0},l_{1}\right)=1$ then
all the hypotheses hold w.r.t $b$ and $\mathcal{F}$ and any self-similar
measure $\mu_{\boldsymbol{p}}$ provides the Rajchman property,
\[
\left|\hat{\mu}_{\boldsymbol{p}}\left(z\right)\right|=O\left(\left|\log\left(z\right)^{-c}\right|\right)
\]
Since there are infinitely many of them the uniqueness is violated.

\end{document}